\begin{document}
\newcommand{\B}{{\cal B}}
\newcommand{\D}{{\cal D}}
\newcommand{\E}{{\cal E}}
\newcommand{\F}{{\cal F}}
\newcommand{\A}{{\cal A}}
\newcommand{\Hh}{{\cal H}}
\newcommand{\Pp}{{\cal P}}
\newcommand{\Z}{{\bf Z}}
\newcommand{\T}{{\cal T}}
\newcommand{\ZZ}{{\mathbb{Z}}}
\newcommand{\qed}{\hphantom{.}\hfill $\Box$\medbreak}
\newcommand{\proof}{\noindent{\bf Proof \ }}
\renewcommand{\theequation}{\thesection.\arabic{equation}}
\newtheorem{theorem}{Theorem}[section]
\newtheorem{lemma}[theorem]{Lemma}
\newtheorem{corollary}[theorem]{Corollary}
\newtheorem{remark}[theorem]{Remark}
\newtheorem{example}[theorem]{Example}
\newtheorem{definition}[theorem]{Definition}
\newtheorem{construction}[theorem]{Construction}


\medskip
\title{Constructions of Augmented Orthogonal Arrays \thanks{Research supported by NSFC grants 11431003 (L. Ji), 11301370 (M. Liang).
 }}

  \author{{\small   Xin Wang$^1$,\ Lijun Ji$^1$\footnote{Corresponding author,  E-mail: jilijun@suda.edu.cn.},\ Yun Li$^1$ \  and Miao Liang$^2$} \\
 {\small $^1$ Department of Mathematics, Soochow University, Suzhou
 215006, China}\\
{\small E-mail: jilijun@suda.edu.cn}\\
{\small $^2$ Department of Mathematics and Physics, Suzhou Vocational University, Suzhou 215104, P.R.China.}\\
}

\date{}
\maketitle
\begin{abstract}
\noindent \\ Augmented orthogonal arrays (AOAs) were introduced by Stinson, who showed the equivalence between  ideal ramp
schemes and augmented orthogonal arrays (Discrete Math. 341 (2018),  299-307).
In this paper, we show that there is  an AOA$(s,t,k,v)$ if and only if there is an OA$(t,k,v)$ which can be partitioned into $v^{t-s}$ subarrays, each being an OA$(s,k,v)$, and that there is a linear AOA$(s,t,k,q)$ if and only if there is a linear maximum distance separable (MDS) code of length $k$ and dimension $t$ over $\mathbb{F}_q$ which contains a linear MDS subcode of length $k$ and dimension $s$ over $\mathbb{F}_q$.  Some constructions for AOAs and some new infinite classes of AOAs are also given.

\medskip

\noindent {\bf Keywords}:  augmented orthogonal array, orthogonal array, ideal ramp scheme

\medskip


\end{abstract}


\section{Introduction}

An {\em orthogonal array}, denoted by OA$(t,k,v)$, is a $v^t$ by $k$ array with entries from a symbol set $X$ of size $v$ such
that each of its $v^t\times t$ subarrays contains every $t$-tuple from $X^t$
exactly once. It is well known that the existence of an OA$(2,k,v)$  is equivalent to the existence of  a set of $k-2$ mutually
orthogonal Latin squares (MOLSs) of side $v$.
The following elegant theorem was due to Bush \cite{Bush1952a}.

\begin{theorem}
[\cite{Bush1952a}]
\label{OA-primepower}
For any prime power $q$ and any positive integer $t$ with
$t\leq q$, an OA$(t,q+1,q)$ exists, and if $q\geq 4$ is a
power of $2$ then  an OA$(3,q+2,q)$ exists.
\end{theorem}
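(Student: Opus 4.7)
The plan is to use Bush's classical polynomial construction over $\mathbb{F}_q$. For the first claim, I would index the $q^t$ rows by polynomials $f \in \mathbb{F}_q[x]$ of degree less than $t$, and index the $q+1$ columns by $\mathbb{F}_q \cup \{\infty_1\}$. In row $f$ I place $f(a)$ in column $a \in \mathbb{F}_q$, and the coefficient of $x^{t-1}$ in column $\infty_1$. To check the OA property I would fix any $t$ columns and show that the induced map from polynomials of degree less than $t$ to $t$-tuples of entries is a bijection. If all $t$ columns are finite, this is Lagrange interpolation on $t$ distinct points of $\mathbb{F}_q$; if one of the columns is $\infty_1$, the leading coefficient is prescribed and the remaining $t-1$ coefficients are then determined by a Vandermonde system in $t-1$ unknowns at $t-1$ distinct points of $\mathbb{F}_q$.

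For the second claim, $t = 3$ and $q \geq 4$ is a power of $2$, so I would add one further column $\infty_2$ whose entry in row $f$ is the coefficient of $x$ in $f$, giving $q + 2$ columns in total. The nontrivial step is to verify the OA property for the new three-column configurations involving $\infty_2$; each reduces to a small linear system for the unknown coefficients of the polynomial realizing the prescribed triple. The only case where the characteristic hypothesis enters is when the three chosen columns are $\{\infty_2, a, a'\}$ with $a \neq a'$ in $\mathbb{F}_q$: eliminating the constant term from the equations $y = a_0 + ba + a_2 a^2$ and $y' = a_0 + ba' + a_2 a'^2$ leaves a factor $a + a'$ multiplying the unknown coefficient $a_2$ of $x^2$. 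This factor is nonzero precisely because $\mathrm{char}(\mathbb{F}_q) = 2$ forces $a + a' = 0$ to mean $a = a'$.

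I expect this step — isolating the single configuration in which characteristic $2$ is essential — to be the main conceptual point of the proof. The other new cases, namely $\{\infty_1, \infty_2, a\}$ and $\{\infty_1, a, a'\}$, both reduce to invertible $2 \times 2$ linear systems with no characteristic restriction, so these verifications are routine and can be dispatched uniformly. Finally, the count of $q^3$ rows matches $v^t$ with $v=q$ and $t=3$, confirming that every triple appears exactly once rather than only existing as a covering.
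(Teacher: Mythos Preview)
The paper does not prove this theorem; it is quoted as a classical result of Bush with a citation and no argument. Your proposal is a correct reconstruction of Bush's original polynomial (extended Reed--Solomon) construction, and the key step---that the factor $a+a'$ arising in the $\{\infty_2,a,a'\}$ case is nonzero for $a\neq a'$ precisely in characteristic~$2$---is identified and handled correctly. One cosmetic slip: among what you call ``the other new cases,'' the triple $\{\infty_1,a,a'\}$ is not new (it was already covered in the $q+1$-column array), and $\{\infty_1,\infty_2,a\}$ actually determines $a_0$ directly rather than via a $2\times 2$ system; neither affects the validity of the argument.
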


Orthogonal arrays belong to an important and high-profile area of
combinatorics and statistics. They are of fundamental importance as
ingredients in the construction of other useful combinatorial
objects (see \cite{BJL1999, CD2007,HSS1999}).

Recently, Stinson introduced a concept of
augmented orthogonal arrays in order to characterize  ideal ramp
schemes \cite{Stinson2017}.

An augmented orthogonal array, denoted by AOA$(s, t, k, v)$, is a $v^t$ by $k + 1$
array $A$ that satisfies the following properties:

1. the first $k$ columns of $A$ form an orthogonal array OA$(t, k, v)$ on a symbol set $X$ of
size $v$; \\
\indent 2. the last column of $A$ contains symbols from a set $Y$ of size $v^{t-s}$; \\
\indent 3. any $s$ of the first $k$ columns of $A$, together with the last column of $A$, contain all
possible $(s + 1)$-tuples from $X^s\times Y$ exactly once.

Informally, an $(s,t,n)$ ramp scheme \cite{BM1985} is a method of distributing secret information (called {\em shares}) to $n$ players, in
such a way  that any $t$ of the players can compute a predetermined {\em secret},  no subset of $s$ players can determine the secret.  The parameters of a ramp scheme satisfy the conditions $0\leq s<t\leq n$.
A $(t-1,t,n)$ ramp scheme is usually called a $(t,n)$ threshold scheme \cite{Shamir1979}. If there are $v$ possible shares in an $(s,t,n)$ ramp scheme, then the number of possible secrets is bounded above by $v^{t-s}$. If an $(s,t,n)$ ramp scheme can be constructed with  $v^{t-s}$ possible secrets (given $v$ possible shares), then the ramp scheme is {\em ideal}
\cite{Stinson2017}.
Stinson showed the equivalence between ideal ramp schemes and augmented orthogonal arrays \cite{Stinson2017}.

\begin{theorem}
[\cite{Stinson2017}]
There is an ideal $(s,t,n)$ ramp scheme defined over a set of $v$ shares if and only if there is an AOA$(s,t,n,v)$.
\end{theorem}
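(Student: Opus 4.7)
The plan is to prove the biconditional by directly identifying rows of the AOA with the share-vector/secret outcomes of the ramp scheme, letting property~1 of the AOA encode reconstruction by any $t$ players and letting property~3 encode perfect secrecy against any $s$ players together with the ideal bound $|Y|=v^{t-s}$.

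For the forward direction, suppose an AOA$(s,t,n,v)$ on share set $X$ of size $v$ and secret set $Y$ of size $v^{t-s}$ exists. I would define the dealer's protocol so that, on input $y\in Y$, it chooses uniformly at random one row of the AOA whose last entry is $y$ and distributes its first $n$ entries as the players' shares. Property~3 implies that for any fixed $s$-subset of columns and any $(x_1,\dots,x_s,y)\in X^s\times Y$ exactly one row matches; summing over $(x_1,\dots,x_s)$ gives $v^s$ rows with each last entry $y$, so the protocol is well-defined and the conditional $s$-share marginal given $y$ is uniform on $X^s$ independently of $y$, which is perfect privacy. Property~1 gives reconstruction, since any $t$-tuple of shares appears in a unique row and hence determines the last entry. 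Ideality is immediate from $|Y|=v^{t-s}$.

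For the converse, given an ideal $(s,t,n)$ ramp scheme with $v$ shares and $|Y|=v^{t-s}$ secrets drawn uniformly, I would list each share-vector/secret outcome of positive probability as a row of the candidate AOA. The main obstacle is the intermediate claim that the joint distribution of shares and secret is uniform on a support of cardinality exactly $v^t$. My plan is to show that ideality, together with perfect privacy and reconstruction, forces every $t$-tuple of players' shares to be uniformly distributed over $X^t$, which yields a joint support of exactly $v^t$ outcomes with uniform probability; a non-uniform realisation can be symmetrised over the dealer's private randomness without changing the access structure, so one may assume uniformity at the outset.

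Granting this claim, property~1 follows because reconstruction forbids two rows with distinct secrets from sharing a $t$-projection, so the $v^t$ equally weighted rows realise each of the $v^t$ possible $t$-tuples exactly once; property~3 follows because perfect privacy together with uniformity forces each $(s+1)$-tuple in $X^s\times Y$ to appear equally often across secret values, and the identity $v^s\cdot v^{t-s}=v^t$ then forces each to appear exactly once.
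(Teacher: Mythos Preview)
The paper does not prove this theorem at all: it is quoted as a result of Stinson \cite{Stinson2017} and used as background, so there is no ``paper's own proof'' to compare against. What you have written is therefore an independent attempt at Stinson's equivalence.

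Your forward direction is fine and is essentially the standard argument: uniform sampling among the rows with a given last entry, with property~1 giving reconstruction and property~3 giving both well-definedness and perfect privacy.

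The converse is where the real content lies, and your proposal does not yet supply it. You correctly identify the obstacle---showing that the joint distribution of the $n$ shares and the secret is uniform on a set of exactly $v^t$ rows---but your suggested resolution (``symmetrise over the dealer's private randomness'') is not an argument. Averaging a scheme over its own randomness does not change its distribution, and averaging over some external symmetry group is not available unless you already know the support has that symmetry. What is actually needed is the entropy calculation that drives Stinson's proof: ideality means $H(\text{secret})=(t-s)\log v$; privacy forces $H(\text{secret}\mid \text{any }s\text{ shares})=H(\text{secret})$; reconstruction forces $H(\text{secret}\mid \text{any }t\text{ shares})=0$; combining these with the share-alphabet bound $H(\text{share}_i)\le \log v$ one deduces equality everywhere, hence every $t$-subset of shares is uniform on $X^t$ and every $(s\text{-subset},\text{secret})$ pair is uniform on $X^s\times Y$. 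Only after that can you read off properties~1 and~3. Without this step, your ``Granting this claim'' paragraph is assuming exactly what has to be proved.
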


For more information on ideal ramp schemes, we refer the reader to \cite{Stinson2017} and the references therein.

An OA$(t,k,v)$, say $A$, over $X$ is called $s$-resolvable if the array $A$ can be partitioned into $v^{t-s}$ subarrays $P_i$, $1\leq i\leq v^{t-s}$, such that each $P_i$ is an OA$(s,k,v)$ over $X$. Usually, when $s=1$, we simply call it resolvable.  We give a characterization of AOAs in terms of $s$-resolvable OAs as follows.

\begin{theorem}
\label{equivalence}
There is an AOA$(s,t,k,v)$ if and only if there is an $s$-resolvable OA$(t,k,v)$.
\end{theorem}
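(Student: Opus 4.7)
The plan is a direct bi-interpretation: the last column of an AOA, viewed as a function from the $v^t$ rows to the symbol set $Y$, is precisely a labelling of the rows by $v^{t-s}$ classes, and I intend to show that the AOA axiom (3) says exactly that each of these classes is an OA$(s,k,v)$. Conversely, an $s$-resolvable partition gives such a labelling.

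For the forward direction, I will suppose $A$ is an AOA$(s,t,k,v)$ with symbol sets $X$ and $Y$, and let $B$ denote its first $k$ columns, which already form an OA$(t,k,v)$. For each $y\in Y$, let $R_y\subseteq B$ be the set of rows of $B$ whose corresponding entry in the last column of $A$ equals $y$; these sets partition $B$ into $|Y|=v^{t-s}$ blocks. To show each $R_y$ is an OA$(s,k,v)$, I fix any $s$ columns of $B$ and any $s$-tuple $\mathbf{x}\in X^s$; by axiom~(3) of the AOA, the pair $(\mathbf{x},y)$ occurs in these $s$ columns plus the last column exactly once, which is equivalent to saying $\mathbf{x}$ occurs in the chosen $s$ columns of $R_y$ exactly once. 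This forces $|R_y|=v^s$ and makes $R_y$ an OA$(s,k,v)$, completing the $s$-resolution.

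For the converse, I start with an $s$-resolvable OA$(t,k,v)$, say $B$, partitioned into $v^{t-s}$ subarrays $P_1,\dots,P_{v^{t-s}}$, each an OA$(s,k,v)$. Let $Y=\{1,\dots,v^{t-s}\}$ and form $A$ by appending to $B$ a column whose entry in each row of $P_i$ equals $i$. Properties (1) and (2) of AOA$(s,t,k,v)$ are immediate from the construction. For property~(3), fix any $s$ of the first $k$ columns and any $(\mathbf{x},i)\in X^s\times Y$; only rows in $P_i$ carry the label $i$, and since $P_i$ is an OA$(s,k,v)$, the $s$-tuple $\mathbf{x}$ appears in those chosen columns of $P_i$ exactly once, yielding exactly one row of $A$ with the $(s+1)$-tuple $(\mathbf{x},i)$.

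I do not anticipate a real obstacle: both directions are essentially a book-keeping argument, and the only non-trivial observation is that axiom~(3) of an AOA, stated as an exactness condition on $(s+1)$-tuples, translates cleanly into the statement that each fiber of the last column is an OA$(s,k,v)$. The mild point worth being careful about is verifying the cardinality count $|R_y|=v^s$ in the forward direction, which follows at once from the $s$-tuple condition applied to any fixed choice of $s$ columns.
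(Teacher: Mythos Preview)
Your proof is correct and follows essentially the same approach as the paper's own proof: in both directions you use the last column as a labelling that indexes the partition classes, and you verify that axiom~(3) of the AOA is equivalent to each class being an OA$(s,k,v)$. The only difference is cosmetic---you spell out the converse verification of property~(3) explicitly, whereas the paper simply declares it ``routine to check.''
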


\proof Let $A$ be an AOA$(s,t,k,v)$ where the elements in the first $k$ columns come from the symbol set $X$ and the symbols in the last column come from the set $Y$ of size $v^{t-s}$. Take an arbitrary element $y$ from $Y$ and consider the subarray $A_y$ consisting of all rows with the last coordinate being $y$. Since any $s$ of the first $k$ columns of $A$, together with the last column of $A$, contain all
possible $(s + 1)$-tuples from $X^s\times Y$ exactly once, each $s$-tuple from $X^s$ occurs exactly once in any $s$ of the first $k$ columns of $A_y$. It follows that the array $A_y'$ obtained by deleting the last column of $A_y$ is an OA$(s,k,v)$ over $X$.  Since the first $k$ columns of $A$ form an OA$(t, k, v)$ and
all $A_y'$ form a partition of the first $k$ columns of $A$, the first $k$ columns of $A$ form an $s$-resolvable OA$(t,k,v)$ over $X$.

Conversely, let $A$ be an $s$-resolvable OA$(t,k,v)$ over $X$. Then $A$ can be partitioned into $v^{t-s}$ subarrays $A_i$, $1\leq i\leq v^{t-s}$, such that each $A_i$ is an OA$(s,k,v)$ over $X$. For $1\leq i\leq v^{t-s}$, append a column vector of dimension $v^s$ with all coordinates being $i$ to $A_i$ to obtain a new array $A_i'$. It is routine to check that the
new array consisting of $A_i'$, $1\leq i\leq v^{t-s}$, is an   AOA$(s,t,k,v)$. \qed

By Theorem \ref{equivalence},
we only need to pay attention to constructions of $s$-resolvable OAs.  Clearly,  deleting $i$ columns  from an $s$-resolvable OA$(t,k,v)$ gives an $s$-resolvable OA$(t,k-i,v)$ for $1\leq i\leq k-t$.
Note that  $s$-resolvable OAs are of interest in design theory.  Similar combinatorial structures such as  resolvable $t$-designs and 2-resolvable Steiner quadruple systems have been widely studied, see \cite{CD2007,HP1992}.

Let $q$ be a prime power and $\mathbb{F}_q$ the finite field of order $q$. An OA$(t,k,q)$ over $\mathbb{F}_q$ is called  linear if the set of row vectors can be viewed as a subspace of dimension $t$ of a vector space of dimension $k$ over $\mathbb{F}_q$.
Stinson gave some constructions for AOAs and linear AOAs, which are listed in the following.

\begin{theorem}
[\cite{Stinson2017}]
\label{AOA(t-1,t,k,v)}
An AOA$(t-1, t, k, v)$ is equivalent to an OA$(t, k+1, v)$.
\end{theorem}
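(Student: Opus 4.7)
The plan is to reduce the statement to Theorem \ref{equivalence} and then prove the remaining equivalence between $(t-1)$-resolvable OA$(t,k,v)$'s and ordinary OA$(t,k+1,v)$'s. By Theorem \ref{equivalence}, an AOA$(t-1,t,k,v)$ exists iff a $(t-1)$-resolvable OA$(t,k,v)$ exists, so it suffices to build a natural bijection between the latter objects and OA$(t,k+1,v)$'s. In fact, unwinding the construction in the proof of Theorem \ref{equivalence} already tells me what the new $(k+1)$-st column should be: it is the label of the resolution class, now regarded as a symbol from $X$ (since $v^{t-s}=v^{t-(t-1)}=v$).

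For the forward direction, I would start with a $(t-1)$-resolvable OA$(t,k,v)$ partitioned into subarrays $A_1,\dots,A_v$, each an OA$(t-1,k,v)$. I append the constant column with entry $i$ to $A_i$ and check the OA$(t,k+1,v)$ property on the union. If the $t$ chosen columns all lie among the first $k$, the property holds because the original array is an OA$(t,k,v)$. If the $(k+1)$-st column is one of the chosen $t$, I fix a value $i$ in that column, restrict to $A_i$, and use the fact that $A_i$ is an OA$(t-1,k,v)$ to conclude that each $(t-1)$-tuple on the other $t-1$ columns appears exactly once; summing over $i$ gives each $t$-tuple exactly once.

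For the converse, I begin with an OA$(t,k+1,v)$ and group its rows according to the value in the $(k+1)$-st column. I first observe that the first $k$ columns themselves form an OA$(t,k,v)$ (since any $t$ among the $k+1$ columns contain every $t$-tuple exactly once, in particular any $t$ chosen from the first $k$). Next, fixing a value $i$ in the last column and using the OA property applied to the last column together with any $t-1$ of the first $k$ columns, each $(t-1)$-tuple appears exactly once in that subarray; thus the block of rows with last coordinate $i$ projects onto the first $k$ columns as an OA$(t-1,k,v)$. These $v$ blocks partition the OA$(t,k,v)$, giving the desired $(t-1)$-resolution.

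I do not expect a serious obstacle; the main thing to be careful about is bookkeeping of which $t$ columns are selected in the OA verification (the two cases according to whether the appended column is included), and the observation that $v^{t-s}=v$ when $s=t-1$, which is what makes the label column fit into the same symbol set of size $v$ and turns the $(t-1)$-resolvable OA$(t,k,v)$ into a genuine OA$(t,k+1,v)$ rather than a mixed-alphabet array.
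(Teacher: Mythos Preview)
Your argument is correct. Note, however, that the paper does not supply its own proof of this theorem; it is simply quoted from \cite{Stinson2017}. The most direct argument bypasses Theorem~\ref{equivalence} altogether: when $s=t-1$ the auxiliary symbol set $Y$ has size $v^{t-s}=v$, so after identifying $Y$ with $X$ the three defining conditions of an AOA$(t-1,t,k,v)$ are literally the statement that the full $v^t\times(k+1)$ array is an OA$(t,k+1,v)$ over $X$ (condition~1 handles any $t$ columns chosen among the first $k$, and condition~3 handles any $t-1$ of the first $k$ together with the last column). Your route through $(t-1)$-resolvable OA$(t,k,v)$'s amounts to the same verification once one unwinds the proof of Theorem~\ref{equivalence}, so the difference is purely one of packaging.
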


\begin{theorem}
[\cite{Stinson2017}]
\label{OA-AOA}
If there exists an OA$(t, k + t-s, v)$, then there exists an AOA$(s, t, k, v)$.
\end{theorem}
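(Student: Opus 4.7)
By Theorem \ref{equivalence}, it suffices to construct an $s$-resolvable OA$(t,k,v)$ from an OA$(t,k+t-s,v)$. So my plan is to start with an OA$(t,k+t-s,v)$, call it $B$, whose columns I split as ``first $k$ columns'' and ``last $t-s$ columns,'' and then use the values in the last $t-s$ columns to define the desired partition of the rows.

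Concretely, for each $(t-s)$-tuple $y\in X^{t-s}$, let $B_y$ be the set of rows of $B$ whose last $t-s$ coordinates equal $y$. Since $|t-s|+s=t$, for every choice of $s$ columns among the first $k$, those $s$ columns together with the last $t-s$ columns form $t$ distinct columns of $B$, so each $t$-tuple in $X^t$ appears in exactly one row. Fixing the last $t-s$ coordinates to $y$ and letting the chosen $s$ coordinates vary over $X^s$ then shows that $|B_y|=v^s$ and, moreover, that after restricting $B_y$ to the first $k$ columns, every $s$-tuple appears exactly once in those chosen $s$ columns. Since the choice of $s$ columns was arbitrary, the restriction of $B_y$ to its first $k$ columns is an OA$(s,k,v)$.

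As $y$ ranges over $X^{t-s}$, the sets $B_y$ partition the row set of $B$ into $v^{t-s}$ blocks. The first $k$ columns of $B$ themselves form an OA$(t,k,v)$ (trivially, by deleting columns from $B$), and the preceding paragraph shows this OA is partitioned into $v^{t-s}$ subarrays, each an OA$(s,k,v)$. This is exactly an $s$-resolvable OA$(t,k,v)$, so Theorem \ref{equivalence} yields an AOA$(s,t,k,v)$.

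I do not expect any serious obstacle: the only thing to check carefully is the counting argument that pins down $|B_y|=v^s$ and the ``each $s$-tuple appears once'' property simultaneously, but both follow from a single application of the OA$(t,k+t-s,v)$ property to the $t$ columns consisting of any $s$ chosen among the first $k$ together with all $t-s$ of the last columns.
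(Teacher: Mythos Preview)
Your proof is correct. Note, however, that the paper does not supply its own proof of this theorem: it is quoted from \cite{Stinson2017}. Stinson's original argument is essentially the same partition-by-the-last-$t-s$-coordinates idea, but phrased directly in the AOA language (encode the last $t-s$ coordinates as a single symbol from a set of size $v^{t-s}$ to form the extra column) rather than passing through the $s$-resolvable characterization of Theorem~\ref{equivalence}. Your detour through Theorem~\ref{equivalence} is perfectly fine and arguably cleaner given the framework of the present paper; the underlying combinatorics is identical.

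One cosmetic point: you write ``$|t-s|+s=t$'' where simply ``$(t-s)+s=t$'' is meant, since $s<t$ throughout.
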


\begin{theorem}
[\cite{Stinson2017}]
Suppose there is a linear OA$(t-s, t, q)$. Then there exists an AOA$(s, t, t, q)$.
\end{theorem}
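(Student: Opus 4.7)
By Theorem \ref{equivalence}, producing an AOA$(s,t,t,q)$ is equivalent to producing an $s$-resolvable OA$(t,t,q)$. Since an OA$(t,t,q)$ is (up to row ordering) just the complete array of all $q^{t}$ vectors in $\mathbb{F}_q^{t}$, the task reduces to partitioning $\mathbb{F}_q^{t}$ into $q^{t-s}$ blocks of size $q^{s}$, with each block forming an OA$(s,t,q)$.

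The plan is to use the given linear OA together with the coset partition induced by its dual code. Let $V\subset \mathbb{F}_q^{t}$ be the $(t-s)$-dimensional row space of the given linear OA$(t-s,t,q)$. The strength-$(t-s)$ condition says that the projection $\pi_{S}|_{V}$ is a bijection onto $\mathbb{F}_q^{t-s}$ for every $(t-s)$-subset $S$ of coordinates; equivalently, no nonzero element of $V$ is supported on a set of size at most $s$. Hence $V$ is a linear MDS code with parameters $[t,t-s,s+1]$, and its dual $V^{\perp}$ is an $s$-dimensional MDS code of length $t$ with minimum weight $t-s+1$.

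I would then take the $q^{t-s}$ cosets of $V^{\perp}$ in $\mathbb{F}_q^{t}$ as the desired partition. Each coset has size $q^{s}$. For any $s$-subset $T$ of coordinates and any coset $u+V^{\perp}$, one has $\pi_{T}(u+V^{\perp})=\pi_{T}(u)+\pi_{T}(V^{\perp})$, so it suffices to check that $\pi_{T}|_{V^{\perp}}$ is a bijection onto $\mathbb{F}_q^{s}$. Its kernel consists of codewords of $V^{\perp}$ supported on the complementary $t-s$ coordinates; because $V^{\perp}$ has minimum weight $t-s+1$, that kernel is trivial, and $\pi_{T}|_{V^{\perp}}$ is an isomorphism. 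Therefore every coset is an OA$(s,t,q)$, the cosets collectively form an $s$-resolvable OA$(t,t,q)$, and Theorem \ref{equivalence} then yields the desired AOA$(s,t,t,q)$.

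The only genuinely substantive step is recognizing that one must pass from $V$ to $V^{\perp}$: the direct cosets of $V$ have the wrong size ($q^{t-s}$ instead of $q^{s}$) and the wrong count, so the duality switch is essential to match the parameters of an $s$-resolvable OA$(t,t,q)$. Once that is in place, the MDS property does all the work, and the rest of the verification is routine linear algebra.
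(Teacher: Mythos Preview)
Your proof is correct and follows essentially the same route as the paper's: both arguments pass from the given linear OA$(t-s,t,q)$ to its dual $V^{\perp}$, an $[t,s]$ MDS code, and then use that $V^{\perp}\subset\mathbb{F}_q^{t}$. The only difference is packaging: the paper invokes Theorem~\ref{linear AOA-MDS} (the MDS-subcode characterization of linear AOAs) for a one-line conclusion, while you unwind this via Theorem~\ref{equivalence} and the explicit coset partition of $\mathbb{F}_q^{t}$ by $V^{\perp}$---which is exactly what underlies Theorem~\ref{linear AOA-MDS} in the first place.
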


\begin{theorem}
[\cite{Stinson2017}]
\label{primepower}
 Suppose $q$ is a prime power and $1 \leq s < t \leq k \leq q$. Then there exists a
linear AOA$(s, t, k, q)$.
\end{theorem}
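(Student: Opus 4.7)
The plan is to apply Theorem \ref{equivalence}: it suffices to construct a linear $s$-resolvable OA$(t,k,q)$ over $\mathbb{F}_q$, i.e.\ a $t$-dimensional subspace $V_t \subseteq \mathbb{F}_q^k$ whose row set is an OA$(t,k,q)$ and which contains an $s$-dimensional subspace $V_s$ whose row set is an OA$(s,k,q)$. To produce such a nested pair I would use the standard Reed--Solomon construction, which is available precisely because $k \le q$.

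Concretely, pick $k$ distinct scalars $\alpha_1,\dots,\alpha_k \in \mathbb{F}_q$ and, for each $r \in \{s,t\}$, set
\[
V_r \;=\; \{(f(\alpha_1),\dots,f(\alpha_k)) : f \in \mathbb{F}_q[x],\ \deg f < r\}.
\]
Polynomial evaluation at $k \ge r$ distinct points is injective on polynomials of degree $<r$, so $V_r$ is an $\mathbb{F}_q$-subspace of dimension $r$, and clearly $V_s \subseteq V_t$. To verify that $V_t$ is an OA$(t,k,q)$, fix any $t$ columns $j_1 < \dots < j_t$; the restriction map $f \mapsto (f(\alpha_{j_1}),\dots,f(\alpha_{j_t}))$ is represented by an invertible $t \times t$ Vandermonde matrix on the distinct nodes $\alpha_{j_1},\dots,\alpha_{j_t}$, so every tuple in $\mathbb{F}_q^t$ is realized exactly once in those columns. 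The identical argument with $s$ in place of $t$ shows that $V_s$ is an OA$(s,k,q)$.

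It remains to exhibit the $s$-resolution. Partition $V_t$ into the $q^{t-s}$ additive cosets $V_s + w$, with $w$ ranging over a transversal inside $V_t$. Each coset is a translate of $V_s$, and the strength-$s$ orthogonal array property is preserved under componentwise translation: the projection of $V_s + w$ to any $s$ columns is the projection of $V_s$ to those columns shifted by a fixed vector, and so remains a bijection onto $\mathbb{F}_q^s$. Hence every coset forms an OA$(s,k,q)$, $V_t$ is $s$-resolvable, and Theorem \ref{equivalence} then yields the desired linear AOA$(s,t,k,q)$.

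No step of the argument is technically difficult; the only place that deserves any care is this last one, where one must notice that although the cosets $V_s + w$ are generally not linear subspaces, they still inherit the OA$(s,k,q)$ property by translation invariance. This is also the step that anchors the ``linearity'' of the final object, since both $V_t$ and $V_s$ are honest linear codes, matching the MDS-code reformulation announced in the abstract.
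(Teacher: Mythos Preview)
The paper does not give its own proof of this statement; it is quoted from Stinson~\cite{Stinson2017} and only used as input for later constructions. Your argument is correct and is precisely the nested Reed--Solomon instantiation of the paper's Theorem~\ref{linear AOA-MDS} (linear AOA $\Leftrightarrow$ linear MDS code containing a linear MDS subcode). Stinson's original proof, recorded in the paper as Construction~\ref{linear AOA}, works instead with an explicit $t\times(k+t-s)$ generator matrix having two independence properties; the paper's Construction~\ref{linear AOA'} then rewrites that as a $t\times k$ matrix with a distinguished $s$-row block, which is exactly the generator-matrix form of your pair $V_s\subset V_t$. So the underlying Vandermonde content is identical; you simply reach it through the $s$-resolvability characterization rather than the matrix one.

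One small point worth tightening: Theorem~\ref{equivalence} as stated only produces an AOA, not a \emph{linear} one, so your last sentence is doing real work and deserves to be made explicit. Rather than appealing to the abstract's MDS reformulation, you can simply label each coset by its image under the linear quotient map $\pi\colon V_t\to V_t/V_s\cong\mathbb{F}_q^{\,t-s}$; then the full $(k+1)$-column array $\{(v,\pi(v)):v\in V_t\}$ is visibly a $t$-dimensional $\mathbb{F}_q$-subspace, which is exactly what ``linear AOA'' means here.
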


\begin{theorem}
[\cite{Stinson2017}]
Suppose $q$ is a prime power and $s \leq q-1$. Then there exists an AOA$(s, q+
1, q + 1, q)$ but there does not exist an OA$(q + 1, 2(q + 1)-s, q)$.
\end{theorem}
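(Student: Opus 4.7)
My plan is to handle the two halves of the statement separately, using Theorem \ref{equivalence} for the existence part and a Bush-type upper bound for the non-existence part.

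For the existence of AOA$(s,q+1,q+1,q)$, I will invoke Theorem \ref{equivalence} and instead construct an $s$-resolvable OA$(q+1,q+1,q)$. Since an OA$(q+1,q+1,q)$ has $q^{q+1}$ rows and $q+1$ columns over an alphabet of size $q$ in which every $(q+1)$-tuple appears exactly once, it is (up to row order) just the complete array $\mathbb{F}_q^{q+1}$. The task therefore reduces to partitioning $\mathbb{F}_q^{q+1}$ into $q^{q+1-s}$ subarrays, each an OA$(s,q+1,q)$.

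For this, I would take a linear $[q+1,s]$ MDS code $C$ over $\mathbb{F}_q$ -- for instance, a doubly extended Reed--Solomon code, which exists because $s\le q-1<q+1$ -- and use the $q^{q+1-s}$ cosets of $C$ in $\mathbb{F}_q^{q+1}$ as the parts of the partition. To verify that each coset is an OA$(s,q+1,q)$, I would fix any choice of $s$ columns and let $\pi\colon\mathbb{F}_q^{q+1}\to\mathbb{F}_q^{s}$ denote the corresponding projection. By the MDS property, $\pi$ restricts to a bijection from $C$ onto $\mathbb{F}_q^{s}$; consequently, on every coset $v+C$ the projection is also bijective, since $\pi(v+C)=\pi(v)+\mathbb{F}_q^{s}=\mathbb{F}_q^{s}$. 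Hence each coset realises an OA$(s,q+1,q)$, and the cosets collectively form an $s$-resolution of the complete array.

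For the non-existence of OA$(q+1,2(q+1)-s,q)$, I would invoke Bush's upper bound: any OA$(t,k,v)$ of index unity with $t\ge v$ satisfies $k\le t+1$. Taking $t=q+1>q=v$ forces $k\le q+2$. But when $s\le q-1$ we have
\[
2(q+1)-s\;\ge\;2(q+1)-(q-1)\;=\;q+3\;>\;q+2,
\]
so no such OA can exist. The main obstacle is the coset-partition step: one must recognise that the MDS property of $C$ is precisely the condition that forces every coset to be an OA$(s,q+1,q)$, and this is what pushes the result beyond the range $k\le q$ handled by Theorem \ref{primepower}. The non-existence half is then a one-line appeal to Bush's bound, and together the two halves confirm that the AOA here cannot be obtained from Theorem \ref{OA-AOA} and must be produced by the direct coset construction above.
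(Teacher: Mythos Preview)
Your proof is correct. The paper does not give its own proof of this theorem---it is quoted from Stinson---but the paper's Theorem~\ref{OA(s,t,v)-AOA(s,t,t,v)} and its Corollary recover the existence half by essentially the same idea you use: start from an OA$(s,q+1,q)$ (equivalently, a $[q+1,s]$ MDS code) and take translates to partition the trivial array $\mathbb{F}_q^{q+1}$ into OA$(s,q+1,q)$'s. Your coset formulation is simply the linear-code specialisation of that translate construction; in the linear case the two partitions coincide, since the vectors $(b_1,\ldots,b_{t-s},0,\ldots,0)$ form a complete set of coset representatives for any $[q+1,s]$ MDS code. The non-existence half via Bush's bound (Theorem~\ref{Bush Bound} with $t=q+1\ge v$) is the standard argument and matches what the paper invokes elsewhere.
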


In \cite{Stinson2017}, Stinson mentioned  three problems for possible future study:
\begin{itemize}
\item[1.]  It is interesting to give constructions of linear AOA$(s, t , k, q)$'s but the corresponding OA$(t , k+t-s,q)$'s (linear or not) do not exist.
\item[2.]  A related problem is to find parameter sets for which linear AOAs exist but linear OAs do not exist.
\item[3.]  A third problem concerns constructions over alphabets of non-prime power order. Constructions of ideal ramp schemes over alphabets of non-prime power order would also be of interest. Again, the most interesting cases are those where an AOA exists but the corresponding OA does not exist.
\end{itemize}

By Theorem \ref{AOA(t-1,t,k,v)},  an AOA$(1,2,k,v)$ is equivalent to an OA$(2,k+1,v)$ and a resolvable OA$(2,k,v)$. A lot of  work has been done on the existence of OA$(2,k,v)$, see \cite{CD2007}.  In this paper, we establish some constructions of AOAs  with $t\geq 3$.
Many infinite classes of AOAs over alphabets of non-prime power orders and linear AOAs are obtained. We also identify some parameters  for which there exists a linear AOA$(s, t , k, q)$ but there does not exist an OA$(t , k+t-s,q)$.

The remainder of this paper is organized as follows. In Section 2, we give some constructions of AOAs over alphabets of non-prime power order. In Section 3, we rewrite Stinson's construction for linear AOAs and give  some direct constructions of linear AOAs.  Finally, we identify some parameters  for which there exists a linear AOA$(s, t , k, q)$ but there does not exist an OA$(t , k+t-s,q)$ in Section 4.


\section{Constructions of AOAs of non-prime power order}

In this section, we give some constructions of AOAs over alphabets of non-prime power order.

\begin{theorem}
\label{product-AOA}
If there is an AOA$(s,t,k,v)$ and an AOA$(s,t,k,u)$,  there is an AOA$(s,t,k,uv)$.
\end{theorem}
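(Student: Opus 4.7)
The plan is to invoke Theorem \ref{equivalence} to rewrite the statement as a claim about $s$-resolvable OAs, and then apply the classical Kronecker (direct product) construction for orthogonal arrays, checking that $s$-resolvability is preserved.

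More precisely, by Theorem \ref{equivalence}, an AOA$(s,t,k,v)$ is equivalent to an $s$-resolvable OA$(t,k,v)$, so it suffices to prove: given an $s$-resolvable OA$(t,k,v)$ on a set $X$ with $|X|=v$ and an $s$-resolvable OA$(t,k,u)$ on a set $Y$ with $|Y|=u$, there exists an $s$-resolvable OA$(t,k,uv)$ on the set $X\times Y$. Write $A$ for the first OA, partitioned into subarrays $A_1,\ldots,A_{v^{t-s}}$, each an OA$(s,k,v)$; and $B$ for the second, partitioned into $B_1,\ldots,B_{u^{t-s}}$, each an OA$(s,k,u)$.

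The plan is to form, for every pair of rows $\mathbf{a}=(a_1,\ldots,a_k)$ of $A$ and $\mathbf{b}=(b_1,\ldots,b_k)$ of $B$, the row $((a_1,b_1),\ldots,(a_k,b_k))$ over $X\times Y$, and to let $C_{i,j}$ denote the subarray built from the rows of $A_i$ paired with the rows of $B_j$. First I would verify the standard fact that pairing an OA$(t,k,v)$ with an OA$(t,k,u)$ coordinatewise produces an OA$(t,k,uv)$: in any $t$ columns, each $t$-tuple over $X$ appears exactly once among rows of $A$ and each $t$-tuple over $Y$ appears exactly once among rows of $B$, so each $t$-tuple over $X\times Y$ appears exactly once in the product. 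The same argument applied at level $s$ shows that each $C_{i,j}$ is an OA$(s,k,uv)$ on $X\times Y$. Then I would count: there are $v^{t-s}\cdot u^{t-s}=(uv)^{t-s}$ subarrays $C_{i,j}$, each of size $v^s u^s=(uv)^s$, and together they exhaust all $(uv)^t$ rows of the product array, so $\{C_{i,j}\}$ is the required partition of the product OA into OA$(s,k,uv)$'s.

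Finally, one application of Theorem \ref{equivalence} in the reverse direction converts the $s$-resolvable OA$(t,k,uv)$ back into an AOA$(s,t,k,uv)$.

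I do not expect a real obstacle: everything is a routine verification once the product indexing is set up. The only place one must be a little careful is in matching the counts $v^{t-s}\cdot u^{t-s}=(uv)^{t-s}$ and $v^s\cdot u^s=(uv)^s$ so that the $C_{i,j}$ really constitute a partition of the product OA into subarrays of the correct size; this is just an exponent check.
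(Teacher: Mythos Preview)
Your proposal is correct and follows essentially the same approach as the paper's own proof: convert to $s$-resolvable OAs via Theorem~\ref{equivalence}, take the coordinatewise (Kronecker) product, observe that each block $C_{i,j}$ is an OA$(s,k,uv)$ and that the full product is an OA$(t,k,uv)$, and convert back. The paper's proof is slightly terser (it cites the product construction rather than re-verifying it), but the structure and all the key steps are identical.
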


\proof By assumption and the equivalence between AOAs and $s$-resolvable OAs in Theorem \ref{equivalence}, we can assume that $A$ is an $s$-resolvable OA$(t,k,v)$ over $G$ and $B$ is an $s$-resolvable OA$(t,k,u)$ over $G'$. Then $A$ can be partitioned into $v^{t-s}$ subarrays $A_i$, $1\leq i\leq v^{t-s}$, each being an OA$(s,k,v)$ over $G$, and $B$ can be partitioned into $v^{t-s}$ subarrays $B_j$, $1\leq j\leq u^{t-s}$, each being an OA$(s,k,u)$ over $G'$. For each row vector $(a_{i,\ell,1},a_{i,\ell,2},\ldots,a_{i,\ell,k})$ of $A_i$ and each row  vector $(b_{j,\ell',1},b_{j,\ell',2},\ldots,b_{j,\ell',k})$ of $B_j$ where $1\leq \ell \leq v^s$ and $1\leq \ell'\leq u^s$, construct a row vector
 $$((a_{i,\ell,1},b_{j,\ell',1}),(a_{i,\ell,2},b_{j,\ell',2}),\ldots,(a_{i,\ell,k},b_{j,\ell',k})).$$
By the well known product construction for orthogonal arrays (for example see \cite{Bush1952b}), the subarray $C_{i,j}$ consisting of all row vectors constructed from $A_i$ and $B_j$ is an OA$(s,k,uv)$ over $G\times G'$, and all these $(uv)^{t-s}$ subarrays $C_{i,j}$ form an OA$(t,k,uv)$. Therefore the conclusion holds by Theorem \ref{equivalence}. \qed

Applying Theorem \ref{product-AOA} with the known AOAs can yield many AOAs of non-prime power orders.

\begin{theorem}
 Suppose each prime divisor of $v$ is not less than $k$  and $1 \leq s < t \leq k$. Then there exists an
AOA$(s, t, k, v)$.
\end{theorem}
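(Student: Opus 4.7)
The plan is to combine Stinson's prime power construction (Theorem \ref{primepower}) with the product construction (Theorem \ref{product-AOA}) that was just established. Write $v = p_1^{e_1} p_2^{e_2} \cdots p_r^{e_r}$ as a product of prime powers with distinct prime bases. By hypothesis, every $p_i \geq k$, and therefore $p_i^{e_i} \geq p_i \geq k$ as well.

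For each index $i$, the parameters satisfy $1 \leq s < t \leq k \leq p_i^{e_i}$ with $p_i^{e_i}$ a prime power, so Theorem \ref{primepower} yields a (linear) AOA$(s, t, k, p_i^{e_i})$. Now I would argue by induction on $r$. The base case $r = 1$ is handled directly by Theorem \ref{primepower}. For the inductive step, given an AOA$(s, t, k, p_1^{e_1} \cdots p_{r-1}^{e_{r-1}})$ from the induction hypothesis and an AOA$(s, t, k, p_r^{e_r})$ from Theorem \ref{primepower}, Theorem \ref{product-AOA} produces an AOA$(s, t, k, v)$.

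I do not expect any serious obstacle: the entire statement reduces to verifying that $k \leq p_i^{e_i}$ holds under the prime-divisor hypothesis (which is immediate since $p_i \geq k$ forces $p_i^{e_i} \geq k$) and then iterating the product construction. The only mild subtlety is ensuring that the product construction, which is stated for two factors, extends to an arbitrary number of factors; this is a routine induction, since the product of AOAs over alphabets of sizes $u$ and $v$ is again an AOA over an alphabet of size $uv$, so the class of orders for which AOA$(s, t, k, \cdot)$ exists is closed under multiplication.
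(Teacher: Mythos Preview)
Your proof is correct and follows essentially the same approach as the paper: factor $v$, apply Theorem~\ref{primepower} to each factor, and iterate Theorem~\ref{product-AOA}. The only cosmetic difference is that the paper writes $v=p_1p_2\cdots p_r$ as a product of (not necessarily distinct) primes and applies Theorem~\ref{primepower} to each prime $p_i\geq k$, whereas you group into prime powers $p_i^{e_i}\geq k$; both decompositions work for exactly the same reason.
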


\proof Write $v=p_1p_2\cdots p_r$ where $p_1,\ldots,p_r$ are  primes. Since $p_i\geq k$ by assumption, there is an AOA$(s,t,k,p_i)$ by Theorem  \ref{primepower}.
Applying Theorem \ref{product-AOA}  gives the result. \qed

\begin{theorem}
\label{AOA(1,k-1,k,v)}
Suppose that $v$ and $k$ are positive integers, $v\not\equiv 2\pmod 4$ and $k\geq 3$. There is an AOA$(1,k-1,k,v)$.
\end{theorem}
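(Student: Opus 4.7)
The strategy is to invoke Theorem \ref{equivalence} to reduce the problem to constructing a $1$-resolvable OA$(k-1,k,v)$, and then to apply Theorem \ref{product-AOA} iteratively to restrict to the prime-power case. The hypothesis $v \not\equiv 2 \pmod 4$ is tailored for this reduction: it forces either $v$ odd, or $v = 2^{a_0} m$ with $a_0 \geq 2$ and $m$ odd, so in both cases $v$ factors as a product of pairwise coprime prime powers each at least $3$. Consequently, it suffices to construct a linear AOA$(1,k-1,k,q)$ for every prime power $q \geq 3$ and every $k \geq 3$.

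For the prime-power step, consider the linear parity-check code
\[
C = \{(x_1, x_2, \ldots, x_k) \in \mathbb{F}_q^k : x_1 + x_2 + \cdots + x_k = 0\}.
\]
Its dual is the repetition code, whose minimum weight is $k$, so $C$ is a linear OA$(k-1,k,q)$ of size $q^{k-1}$. The $1$-resolvability will come from a codeword $u \in C$ all of whose coordinates are nonzero: the $1$-dimensional subcode $\langle u \rangle = \{tu : t \in \mathbb{F}_q\}$ is then an OA$(1,k,q)$, since in column $i$ the entry $t u_i$ runs through $\mathbb{F}_q$ as $t$ varies; and its $q^{k-2}$ cosets in $C$ partition $C$ into translates of this subcode, each of which is again an OA$(1,k,q)$.

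To produce such a weight-$k$ codeword $u$, I will split on the characteristic $p$ of $\mathbb{F}_q$. When $p$ is odd and $p \nmid (k-1)$, take $u = (1, 1, \ldots, 1, -(k-1))$. When $p$ is odd and $p \mid (k-1)$ (which forces $k \geq p + 1 \geq 4$), take $u = (2, 1, 1, \ldots, 1, -1)$ with $k-2$ middle ones; its coordinate sum equals $k - 1 \equiv 0 \pmod p$ and each coordinate is a unit. When $p = 2$ and $k$ is even, the all-ones vector works. When $p = 2$ and $k$ is odd (so $q = 2^a \geq 4$ by $q \geq 3$), fix any $\alpha \in \mathbb{F}_q \setminus \{0,1\}$ and take $u = (1, \ldots, 1, \alpha, \alpha+1)$ with $k-2$ ones; the sum is $(k-2) + 1 = k-1 \equiv 0 \pmod 2$, and every coordinate is nonzero.

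The main obstacle is the construction of this weight-$k$ codeword: the case analysis above is where the hypothesis $q \geq 3$ is genuinely used, which in turn is exactly what $v \not\equiv 2 \pmod 4$ supplies through the prime-power factorization. For $q = 2$ and $k$ odd no such codeword exists, since the only weight-$k$ vector in $\mathbb{F}_2^k$ is the all-ones vector, whose coordinate sum is $k \equiv 1 \pmod 2$; this confirms that the hypothesis cannot be omitted for this line of attack.
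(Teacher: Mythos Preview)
Your proof is correct and follows essentially the same route as the paper: factor $v$ into prime powers each at least $3$ (which is exactly what $v\not\equiv 2\pmod 4$ guarantees), build a linear AOA$(1,k-1,k,q)$ for each such prime power by exhibiting a full-weight codeword in the parity-check code $C=\{x:\sum x_i=0\}$, and then combine via the product construction (Theorem~\ref{product-AOA}). The paper packages the prime-power step as a separate result (Theorem~\ref{AOA(1,k-1,k,q)}) and produces the full-weight codeword with a single uniform formula $(\alpha+k-2,-\alpha,-1,\ldots,-1)$ for any $\alpha\in\mathbb{F}_q\setminus\{0\}$ with $\alpha+k-2\neq 0$, whereas you do a short case split on the characteristic; both choices work and the overall argument is the same.
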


\proof Write $v=2^{\alpha}p_1p_2\cdots p_r$ where $p_1,\ldots,p_r$ are odd primes. Then $\alpha\neq 1$ by assumption.
When $\alpha\geq 2$, by Theorem \ref{AOA(1,k-1,k,q)} there is a linear AOA$(1,k-1,k,2^{\alpha})$ over $\mathbb{F}_{2^{\alpha}}$ and  a linear AOA$(1,k-1,k,p_i)$ over $\mathbb{F}_{p_i}$ for $1\leq i\leq r$.
Applying Theorem \ref{product-AOA}  gives an AOA$(1,k-1,k,v)$ over $\mathbb{F}_{2^{\alpha}}\times \mathbb{F}_{p_1}\times \cdots \times \mathbb{F}_{p_r}$. When $\alpha=0$, the result  is obtained similarly. \qed

\begin{theorem}
\label{zero-sum} Let $v,k$ be  positive integers with  $k\equiv 0\pmod 2$. Then there is an AOA$(1,k-1,k,v)$.
\end{theorem}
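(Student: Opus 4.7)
By Theorem~\ref{equivalence}, it suffices to exhibit a $1$-resolvable OA$(k-1,k,v)$, that is, an OA$(k-1,k,v)$ whose $v^{k-1}$ rows partition into $v^{k-2}$ subarrays, each an OA$(1,k,v)$. My plan is to take the standard zero-sum array over $\ZZ_v$ (hence the label of the theorem) and then partition it into orbits of a $\ZZ_v$-action that crucially uses the parity of $k$.

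First I would let $A$ be the array whose rows are all $(x_1,x_2,\ldots,x_k)\in\ZZ_v^k$ satisfying $x_1+x_2+\cdots+x_k\equiv 0\pmod{v}$. Specifying any $k-1$ of the coordinates determines the remaining coordinate uniquely, so $A$ has $v^{k-1}$ rows and is an OA$(k-1,k,v)$. Next I would define a $\ZZ_v$-action on $\ZZ_v^k$ by
\[
t\cdot(x_1,x_2,x_3,x_4,\ldots,x_{k-1},x_k)=(x_1+t,\,x_2-t,\,x_3+t,\,x_4-t,\,\ldots,\,x_{k-1}+t,\,x_k-t).
\]
Because $k$ is even, the alternating $\pm t$'s cancel, so this action preserves the zero-sum condition and hence acts on $A$. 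The action is free (read $t$ off the first coordinate), so every orbit has size exactly $v$. Within any orbit, the $i$th column takes the form $\{x_i\pm t:t\in\ZZ_v\}=\ZZ_v$, i.e.\ a permutation of $\ZZ_v$, so each orbit is an OA$(1,k,v)$. The $v^{k-1}/v=v^{k-2}$ orbits therefore form the desired partition, and Theorem~\ref{equivalence} yields the AOA$(1,k-1,k,v)$.

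The hypothesis $k\equiv 0\pmod 2$ enters in exactly one place: it is what allows the alternating-sign translation to preserve the zero-sum condition. I do not anticipate any serious obstacle; the only conceptual step is recognizing that an alternating $\ZZ_v$-action of order $v$ is the right device, after which the verification reduces to checking that the action is free on $A$ and that each orbit covers every column evenly, both of which are immediate.
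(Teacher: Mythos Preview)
Your proof is correct and essentially identical to the paper's: both take the zero-sum OA$(k-1,k,v)$ over $\ZZ_v$ and partition it via the alternating translation $(x_1,\ldots,x_k)\mapsto(x_1+b,x_2-b,\ldots,x_{k-1}+b,x_k-b)$, which preserves the zero-sum precisely because $k$ is even. The only difference is presentational---you phrase the partition as the orbits of a free $\ZZ_v$-action, while the paper writes out explicit orbit representatives $A_{a_1,\ldots,a_{k-2}}$.
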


\proof Let $A$ consist of the following $k$-tuples:
$$(x_1,x_2,\ldots,x_k),$$
where $x_1,x_2,\ldots,x_k\in \mathbb{Z}_v$ and $x_1+x_2+\cdots+x_k\equiv 0\pmod v$. By the well known zero-sum construction for OAs, the array $A$ is an OA$(k-1,k,v)$.
By Theorem \ref{equivalence}, we only need to show that this OA is resolvable.

For $a_1,\ldots,a_{k-2}\in \mathbb{Z}_v$, let $A_{a_1,\ldots,a_{k-2}}$ consist of the following $v$ $k$-tuples:
$$(a_1+b,a_2-b,\ldots,a_{k-3}+b,a_{k-2}-b,b,-(a_1+\cdots+a_{k-2})-b),$$
where $b\in \mathbb{Z}_v$. Since the $i$-th coordinate runs through $\mathbb{Z}_v$ when $b$ runs through $\mathbb{Z}_v$,
$A_{a_1,\ldots,a_{k-2}}$ is an OA$(1,k,v)$, thereby
this OA is resolvable.  \qed

\begin{theorem}
\label{OA(s,t,v)-AOA(s,t,t,v)}
If there is an OA$(s,t,v)$ with $s<t$ then there is an AOA$(s,t,t,v)$.
\end{theorem}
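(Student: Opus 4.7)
The plan is to apply Theorem \ref{equivalence} and reduce the problem to constructing an $s$-resolvable OA$(t,t,v)$. Since an OA$(t,t,v)$ is simply the full array $X^t$ on a symbol set $X$ of size $v$, the task becomes: partition $X^t$ into $v^{t-s}$ subarrays, each of which is an OA$(s,t,v)$. The natural approach is a translation trick; for this I would identify $X$ with $\mathbb{Z}_v$ so as to have an additive structure, which is harmless because the OA property is preserved under any relabeling of the symbols.

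Given the hypothesized OA$(s,t,v)$ $A$ over $\mathbb{Z}_v$, for each $\vec{y}=(y_1,\ldots,y_{t-s})\in \mathbb{Z}_v^{t-s}$ I would define
$$A_{\vec{y}}=\{(a_1,\ldots,a_s,a_{s+1}+y_1,\ldots,a_t+y_{t-s}) : (a_1,\ldots,a_t)\in A\},$$
that is, shift only the last $t-s$ coordinates by $\vec{y}$. Two claims then complete the argument. First, each $A_{\vec{y}}$ is again an OA$(s,t,v)$: for any $s$-subset $C$ of the $t$ columns, the projection of $A$ onto $C$ is a bijection onto $\mathbb{Z}_v^s$, and post-composing with the coordinate-wise translation on the shifted coordinates (which is a bijection of $\mathbb{Z}_v^s$) keeps the projection bijective. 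Second, the $v^{t-s}$ arrays $A_{\vec{y}}$ partition $\mathbb{Z}_v^t$: for any $\vec{x}\in \mathbb{Z}_v^t$, the OA$(s,t,v)$ property applied to the first $s$ columns of $A$ yields a unique $a\in A$ with $(a_1,\ldots,a_s)=(x_1,\ldots,x_s)$, and then $y_i=x_{s+i}-a_{s+i}$ for $1\leq i\leq t-s$ uniquely determines $\vec{y}$. Hence the disjoint union of the $A_{\vec{y}}$ is all of $X^t$, i.e., an OA$(t,t,v)$, and the partition witnesses $s$-resolvability; Theorem \ref{equivalence} then delivers the desired AOA$(s,t,t,v)$.

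There is no real obstacle here; the argument is a clean translation construction, parallel to the standard way one partitions $\mathbb{Z}_v\times \mathbb{Z}_v$ into $v$ permutation matrices. The only point that might warrant comment is the identification of the abstract symbol set with $\mathbb{Z}_v$, but as already noted this is costless because OA properties are invariant under arbitrary relabeling of the symbol set.
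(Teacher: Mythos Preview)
Your proof is correct and follows essentially the same translation approach as the paper: both identify the symbol set with $\mathbb{Z}_v$, translate the rows of the given OA$(s,t,v)$ by all vectors in $\mathbb{Z}_v^{t-s}$ on a fixed set of $t-s$ coordinates (you shift the last $t-s$, the paper shifts the first $t-s$), and invoke Theorem~\ref{equivalence}. Your write-up is in fact a bit more explicit than the paper's in verifying that the translates partition $\mathbb{Z}_v^t$.
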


\proof Let $A=(a_{i,j})$ be an OA$(s,t,v)$ over $\mathbb{Z}_v$. For any $(t-s)$-tuple $(b_1,b_2,\ldots,b_{t-s})$ from $\mathbb{Z}_v^{t-s}$, let $A_{b_1,b_2,\ldots,b_{t-s}}$ consist of row vectors  $(a_{i,1}+b_1,a_{i,2}+b_2,\ldots,a_{i,t-s}+b_{t-s},a_{i,t-s+1},\ldots,a_{i,t})$, $1\leq i\leq v^s$.
Clearly,   $A_{b_1,b_2,\ldots,b_{t-s}}$ is an  OA$(s,t,v)$ over $\mathbb{Z}_v$. It is routine to check that all $A_{b_1,b_2,\ldots,b_{t-s}}$ form an OA$(t,t,v)$.
So, an $s$-resolvable OA$(t,t,v)$ exists and the conclusion follows from Theorem \ref{equivalence}. \qed

\begin{corollary}
[\cite{Stinson2017}] Suppose $q$ is a prime power and $1 \leq s \leq k\leq q+1$. Then there is an AOA$(s,k,k,q)$.
\end{corollary}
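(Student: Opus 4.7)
The plan is to combine Theorem \ref{OA(s,t,v)-AOA(s,t,t,v)} with Bush's Theorem \ref{OA-primepower}. Theorem \ref{OA(s,t,v)-AOA(s,t,t,v)} promotes an OA$(s,t,v)$ (with $s<t$) into an AOA$(s,t,t,v)$, so to produce AOA$(s,k,k,q)$ it suffices to exhibit an OA$(s,k,q)$.

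First I would split into cases depending on whether $s<k$ or $s=k$. In the main case $s<k$ we have $s\leq k-1\leq q$, so Bush's theorem guarantees an OA$(s,q+1,q)$. Deleting any $q+1-k$ columns from such an array trivially yields an OA$(s,k,q)$ (since any OA remains an OA after removing columns). Applying Theorem \ref{OA(s,t,v)-AOA(s,t,t,v)} with $t=k$, $v=q$ then produces the desired AOA$(s,k,k,q)$.

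The degenerate case $s=k$ falls outside the hypothesis of Theorem \ref{OA(s,t,v)-AOA(s,t,t,v)}, but it is essentially trivial: an AOA$(k,k,k,q)$ has a last column taking values in a set of size $q^{t-s}=q^{0}=1$, so the last column is constant and the AOA condition reduces to asking that the first $k$ columns form an OA$(k,k,q)$, which is just the full array of all $q^{k}$ rows over $\mathbb{F}_{q}^{k}$. This exists for free.

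There is no real obstacle here; the only point requiring a little care is checking that the hypotheses line up, namely that $1\leq s \leq q$ (needed for Bush's theorem to apply) is ensured by $s<k\leq q+1$, and that the boundary case $s=k$ is handled separately since Theorem \ref{OA(s,t,v)-AOA(s,t,t,v)} was stated only for $s<t$.
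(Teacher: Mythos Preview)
Your proposal is correct and follows essentially the same route as the paper: obtain an OA$(s,k,q)$ from Bush's theorem and then invoke Theorem~\ref{OA(s,t,v)-AOA(s,t,t,v)}. In fact you are slightly more careful than the paper, which silently applies Theorem~\ref{OA(s,t,v)-AOA(s,t,t,v)} without isolating the degenerate case $s=k$ that lies outside its stated hypothesis.
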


\proof Since $q$ is a prime power, there is an OA$(s,k,q)$ by Theorem \ref{OA-primepower}. The conclusion then follows by Theorem \ref{OA(s,t,v)-AOA(s,t,t,v)}. \qed

Let $G$ be an abelian group of order $n$.
An  $(n,k,1)$-difference matrix (DM) is an $n\times k$ matrix $D=(d_{i,j})$ with entries from $G$ such that for $1\leq j<\ell\leq k$, the difference list
$$\{a_{i,j}-a_{i,\ell}\colon 1\leq i\leq n\}$$
contains  each element of $G$ exactly once.  It is well known that an  $(n,k,1)$-DM can be used to construct an OA$(2,k+1,n)$ (for example, see \cite{CD2007}).
It is proved that  there is an OA$(3,5,n)$ if there is an $(n,4,1)$-DM \cite{JY2010}.

\begin{lemma}
\label{AOA(1,3,5,odd)}
There is an AOA$(1,3,5,n)$ for any odd integer $n\geq 4$.
\end{lemma}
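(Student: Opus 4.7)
By Theorem~\ref{equivalence}, it is enough to construct a $1$-resolvable OA$(3,5,n)$ for each odd $n\geq 5$. My plan is to lift an $(n,4,1)$-difference matrix to a $5$-column array, with the fifth column chosen so that resolvability falls out for free. First I would fix an $(n,4,1)$-difference matrix $D=(d_{ij})$ over an abelian group $G$ of order $n$---known to exist for every odd $n\geq 5$---and form the $n^3\times 5$ array $A$ whose rows, indexed by $(i,x,y)\in G^3$, are
\[
(d_{i1}+x,\; d_{i2}+x,\; d_{i3}+y,\; d_{i4}+y,\; x-y).
\]
To check strength~$3$, I would examine each of the $\binom{5}{3}=10$ triples of columns. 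In every case the equations for a prescribed target triple reduce to fixing one difference $d_{ij}-d_{i\ell}$ (which pins down $i$ uniquely by the difference-matrix property) together with two further linear equations in $x,y$ that then determine $(x,y)$. The six pairs $(j,\ell)$ from $\{1,2,3,4\}$ cover all ten triples: the three triples containing both columns $1$ and $2$ use $d_{i1}-d_{i2}$, the three containing both $3$ and $4$ use $d_{i3}-d_{i4}$, and the four triples $\{1,3,5\},\{1,4,5\},\{2,3,5\},\{2,4,5\}$ each use one of the remaining four pair-differences (e.g.\ for $\{1,3,5\}$ with target $(a,b,c)$ the constraint becomes $d_{i1}-d_{i3}=a-b-c$).

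For resolvability I would partition $G^3$ into the $n^2$ classes $C_{i_0,s}=\{(i_0,x,s-x):x\in G\}$, one for each $(i_0,s)\in G\times G$. On $C_{i_0,s}$ the first four columns become $d_{i_0,1}+x,\ d_{i_0,2}+x,\ d_{i_0,3}+s-x,\ d_{i_0,4}+s-x$, each manifestly bijective in $x$, while the fifth column simplifies to $2x-s$, which is bijective precisely because $\gcd(2,n)=1$ for odd $n$. So every class is an OA$(1,5,n)$ and $A$ is $1$-resolvable; applying Theorem~\ref{equivalence} then yields the desired AOA$(1,3,5,n)$.

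The main obstacle I anticipate is the input ingredient rather than the construction: one needs an $(n,4,1)$-DM for \emph{every} odd $n\geq 5$. Finite-field constructions handle all odd prime-power cases, and a multiplication-table construction over $\mathbb{Z}_n$ handles any $n$ with $\gcd(n,6)=1$. For odd $n$ divisible by $3$ that are not prime powers---the first being $n=15$---the naive CRT composition breaks down because no $(3,4,1)$-DM exists, and one must invoke the known direct constructions of $(n,4,1)$-DMs from the design-theory literature. Once that existence is secured, the rest of the argument is the routine verification sketched above.
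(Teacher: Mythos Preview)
Your proposal is correct and essentially identical to the paper's proof: both take an $(n,4,1)$-difference matrix over an abelian group of odd order $n$, lift it to the Ji--Yin OA$(3,5,n)$ (your array is that construction after the change of variables $x=u$, $y=u+e$, with the fifth column negated), and exhibit $1$-resolvability via $n^2$ classes in which one coordinate becomes $2u+{\rm const}$, a bijection precisely because $n$ is odd. The obstacle you flag is not actually open---the existence of an $(n,4,1)$-DM for every odd $n$ is a theorem of Ge (2005), and the paper simply cites it.
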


\proof From \cite{Ge2005}, there is an $(n,4,1)$-DM $D=(d_{i,j})$ over an abelian group $G$ of order $n$. From \cite{JY2010}, the array consisting of the following vectors
$$(d_{i,1}+u,d_{i,2}+u,d_{i,3}+u+e,d_{i,4}+u+e,e)$$
where $1\leq i\leq n$ and $u,e\in G$, is an OA$(3,5,n)$ over $G$.  By Theorem \ref{equivalence}, we only need to show that this OA is resolvable.

Clearly, for $1\leq i\leq n$ and $e'\in G$, the subarray $A_{i,e'}$ consisting of the vectors $(d_{i,1}+u,d_{i,2}+u,d_{i,3}+u+u+e',d_{i,4}+u+u+e',u+e')$, $u\in G$, is an OA$(1,5,n)$. Hence, this OA is resolvable. \qed

Ji and Yin introduced a concept of $(n,4,1)$-DM with an adder in order to construct an OA$(3,6,n)$ \cite{JY2010}.

Let $D=(d_{ij})$ be an $(n,4,1)$-DM over
 an abelian group $G$.
An $n$-tuple $s=(s_1,s_2,\ldots,s_n)^T$ over $G$ is called an {\em
adder} of the difference matrix $D$ if $\{s_1,s_2,\ldots,s_n\}=G$ and
the matrix
$$D^s=(d'_{ij}),\ {\rm where}\ d'_{ij}=d_{ij}\ {\rm for}\ j\in
\{1,2\}\ {\rm and}\ d'_{ij}=d_{ij}+s_i\ {\rm for}\ j\in \{3,4\},$$
is also an $(n,4,1)$-DM over the group $G$.

\begin{lemma}
\label{AOA(1,3,6,12)}
There is an AOA$(1,3,6,n)$ for $n\in \{12,24\}$.
\end{lemma}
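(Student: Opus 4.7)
The plan is to follow exactly the template of Lemma~\ref{AOA(1,3,5,odd)}, but with an $(n,4,1)$-DM \emph{equipped with an adder} in place of a bare DM. By Ji--Yin~\cite{JY2010}, such a DM-with-adder yields an OA$(3,6,n)$; if we can then exhibit a resolution of that OA, Theorem~\ref{equivalence} converts it into the desired AOA$(1,3,6,n)$.

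First, I would invoke the existence of an $(n,4,1)$-DM $D=(d_{ij})$ over an abelian group $G$ of order $n$ together with an adder $s=(s_1,\ldots,s_n)^{T}$ for the two orders $n\in\{12,24\}$. These are exactly the small cases for which Ji and Yin built the notion in the first place, so the relevant $D$ and $s$ can be imported from their paper (or, for $G=\ZZ_{12}$ and $G=\ZZ_{24}$, written out and checked directly). Second, I would write down the corresponding Ji--Yin OA$(3,6,n)$, whose rows are indexed by triples $(i,u,e)\in\{1,\dots,n\}\times G\times G$: columns~$1$--$4$ take the form $(d_{i,1}+u,\,d_{i,2}+u,\,d_{i,3}+u+e,\,d_{i,4}+u+e)$ as in Lemma~\ref{AOA(1,3,5,odd)}, column~$5$ carries $e$, and column~$6$ carries $s_i+e$ (or $s_i+u+e$, whichever form Ji--Yin use). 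Strength~$3$ of this six-column array follows because $D$ and $D^s$ are both $(n,4,1)$-DMs.

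The main obstacle is the resolution step, because for $n\in\{12,24\}$ the $2u$-trick used in Lemma~\ref{AOA(1,3,5,odd)} breaks down: the map $u\mapsto 2u$ is not a permutation of $G$ when $n$ is even. What the adder provides is an alternative substitution: for each fixed pair $(i,e')\in\{1,\dots,n\}\times G$, I would index a block of $n$ rows by $u\in G$ via the substitution $e=u+e'$ (or, if necessary, $e=s_i+u+e'$), producing coordinates of the form $d_{i,j}+u+\alpha_j$, $d_{i,j}+(u+s_i)+\beta_j$, $u+e'$, and $s_i+u+e'$, each of which is an affine translate of $u$ and hence a permutation of $G$. The verification that all six coordinates give bijective translates of $u$ uses precisely the two defining properties of an adder --- that $\{s_1,\dots,s_n\}=G$ and that $D^s$ is again a $(1,4,1)$-DM --- and reduces to a finite coordinate-by-coordinate check in each of $G=\ZZ_{12}$ and $G=\ZZ_{24}$. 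Once the resolvability of the OA$(3,6,n)$ is in hand, Theorem~\ref{equivalence} produces the AOA$(1,3,6,n)$, completing the proof.
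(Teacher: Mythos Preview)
Your substitution $e=u+e'$ (and the variant $e=s_i+u+e'$) does \emph{not} escape the $2u$ obstruction; it reintroduces it. With the Ji--Yin rows $(d_{i,1}+u,\,d_{i,2}+u,\,d_{i,3}+u+e+s_i,\,d_{i,4}+u+e+s_i,\,e,\,e+s_i)$, setting $e=u+e'$ makes columns~3 and~4 equal to $d_{i,3}+2u+e'+s_i$ and $d_{i,4}+2u+e'+s_i$, so for $n\in\{12,24\}$ these columns fail to run through $G$ as $u$ varies. Your claim that the coordinates become ``affine translates of $u$'' is thus false for columns~3 and~4; the adder properties do not help here, since once $i$ is fixed $s_i$ is just a constant shift. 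No substitution of the form $e=\lambda u+e'$ with $\lambda\in\{-1,0,1\}$ works: $\lambda=1$ gives $2u$ in columns~3--4, $\lambda=-1$ makes columns~3--4 constant, and $\lambda=0$ makes columns~5--6 constant.

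The missing idea is a genuinely \emph{non-linear} change of variable. The paper sets $e=\sigma(u)+e'$ where $\sigma\colon G\to G$ is a bijection with the additional property that $u\mapsto u+\sigma(u)$ is also a bijection (a complete mapping). Then columns~1--2 are translates of $u$, columns~3--4 are translates of $u+\sigma(u)$, and columns~5--6 are translates of $\sigma(u)$, so all six are permutations of $G$. Such $\sigma$ do not exist on the cyclic groups $\ZZ_{12}$ or $\ZZ_{24}$ (even-order cyclic groups admit no complete mapping), which is why the paper works over $\ZZ_6\times\ZZ_2$ and $\ZZ_3\times\mathbb{F}_8$ instead and writes down explicit $\sigma$ in each case. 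Your proposal over $\ZZ_{12}$ and $\ZZ_{24}$ therefore cannot be repaired without changing the underlying group.
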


\proof From \cite{JY2010}, there is a $(12,4,1)$-DM $D$ with an adder $s$ over $\mathbb{Z}_6\times \mathbb{Z}_2$, and the array consisting of the following row vectors
$$(d_{i,1}+u,d_{i,2}+u,d_{i,3}+u+e+s_i,d_{i,4}+u+e+s_i,e,e+s_i)$$
where $1\leq i\leq 12$ and $u,e\in \mathbb{Z}_6\times \mathbb{Z}_2$, is an OA$(3,6,12)$.  By Theorem \ref{equivalence}, we only need to show that this OA is resolvable.

A mapping $\sigma: \mathbb{Z}_6\times \mathbb{Z}_2\rightarrow \mathbb{Z}_6\times \mathbb{Z}_2$ is defined as follows:
\[
\sigma((x,y))=\left \{ \begin{array}{ll}
(x,y) & {\rm if}\ (x,y)\in \{0,2,4\}\times \{0\}, \\
(x+1,y) & {\rm if}\ (x,y)\in \{0,2,4\}\times \{1\}, \\
(x+1,y+1) & {\rm if}\ (x,y)\in \{1,3,5\}\times \{0\}, \\
(x,y+1) & {\rm if}\ (x,y)\in \{1,3,5\}\times \{1\}. \\
\end{array}
\right .
\]
It is easy to see that $\{u+\sigma(u): u\in \mathbb{Z}_6\times \mathbb{Z}_2\}=\mathbb{Z}_6\times \mathbb{Z}_2$.
Clearly, for $1\leq i\leq 12$ and $e'\in \mathbb{Z}_6\times \mathbb{Z}_2$, the subarray $A_{i,e'}$ consisting of the row vectors $(d_{i,1}+u,d_{i,2}+u,d_{i,3}+u+\sigma(u)+e'+s_i,d_{i,4}+u+\sigma(u)+e'+s_i,\sigma(u)+e',\sigma(u)+e'+s_i)$, $u\in \mathbb{Z}_6\times \mathbb{Z}_2$, is an OA$(1,6,12)$. Hence, this OA$(3,6,12)$ is resolvable.

From \cite{JY2010}, there is a $(24,4,1)$-DM $D$ with an adder $s$  over $\mathbb{Z}_3\times \mathbb{F}_8$, and the array consisting of the following row vectors
$$(d_{i,1}+u,d_{i,2}+u,d_{i,3}+u+e+s_i,d_{i,4}+u+e+s_i,e,e+s_i)$$
where $1\leq i\leq 24$ and $u,e\in \mathbb{Z}_3\times \mathbb{F}_8$, is an OA$(3,6,24)$.  By Theorem \ref{equivalence}, we only need to show that this OA is resolvable.

Let $\alpha$ be a primitive element of $\mathbb{F}_8$ satisfying $1+\alpha+\alpha^3=0$. Define a mapping $\sigma: \mathbb{Z}_3\times \mathbb{F}_8\rightarrow \mathbb{Z}_3\times \mathbb{F}_8$  by $\sigma((x,y))=(x,\alpha y)$ for $(x,y)\in \mathbb{Z}_3\times \mathbb{F}_8$. It is easy to see that $\{(x,y)+(x,\alpha y)\colon (x,y)\in \mathbb{Z}_3\times \mathbb{F}_8\}=\mathbb{Z}_3\times \mathbb{F}_8$. Then for $1\leq i\leq 24$ and $e'\in \mathbb{Z}_3\times \mathbb{F}_8$, the subarray $A_{i,e'}$ consisting of the row vectors $(d_{i,1}+u,d_{i,2}+u,d_{i,3}+u+\sigma(u)+e'+s_i,d_{i,4}+u+\sigma(u)+e'+s_i,\sigma(u)+e',\sigma(u)+e'+s_i)$, $u\in \mathbb{Z}_3\times \mathbb{F}_8$, is an OA$(1,6,24)$. Hence, this OA$(3,6,24)$ is also resolvable.  \qed

\begin{theorem}
\label{AOA(1,3,6,v)}
Let $v$ be a positive integer satisfying
gcd$(v,4)\neq 2$ and gcd$(v,18)\neq 3$. Then there is an
AOA$(1,3,6,v)$.
\end{theorem}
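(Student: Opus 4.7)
The plan is to write $v$ as a product of integers for each of which an AOA$(1,3,6,\cdot)$ is already available, and then to assemble the desired AOA$(1,3,6,v)$ by iterating the product construction of Theorem~\ref{product-AOA}. I write $v = 2^{a}\cdot 3^{b}\cdot w$ with $\gcd(w,6)=1$; the hypothesis $\gcd(v,4)\neq 2$ forces $a\neq 1$, while $\gcd(v,18)\neq 3$ forces $b\neq 1$ whenever $a=0$. The building blocks of size at least $4$ on which I plan to rely are every prime power $q\geq 7$ (via Theorem~\ref{primepower}), the two composite orders $q\in\{12,24\}$ (via Lemma~\ref{AOA(1,3,6,12)}), and the two small prime powers $q\in\{4,5\}$, which lie just outside the range $k\leq q$ of Theorem~\ref{primepower}.

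For the exceptional orders $q=4$ and $q=5$, I will use Theorem~\ref{equivalence} together with a short linear MDS-code argument: take a $[6,3,4]_q$ MDS code (an extended Reed--Solomon code of length $q+2$ for $q=4$, a Reed--Solomon code of length $q+1$ for $q=5$), choose a codeword $c$ of full weight $6$ (the MDS weight enumerator is strictly positive at weight $6$ in both cases), and observe that the one-dimensional subcode $\langle c\rangle$ is itself an OA$(1,6,q)$ because every column of $\langle c\rangle$ runs through all of $\mathbb{F}_q$. The $q^2$ cosets of $\langle c\rangle$ in the $[6,3,4]_q$ code partition the underlying OA$(3,6,q)$ into parallel classes, each an OA$(1,6,q)$, producing the required $1$-resolution and hence AOA$(1,3,6,q)$.

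With these ingredients in hand, the proof reduces to a short case analysis on $(a,b)$. Decompose $w$ into prime powers $p^c$ with $p\geq 5$; each is covered and repeated application of Theorem~\ref{product-AOA} yields AOA$(1,3,6,w)$. To absorb $2^a\cdot 3^b$ I use the fact that the admissible pairs $(a,b)$ are those with $a\in\{0,2,3,\ldots\}$ and any $b\geq 0$, except $(a,b)=(0,1)$. If $b\in\{0\}\cup\{2,3,\ldots\}$, then $2^a$ and $3^b$ are handled independently as (possibly absent) prime powers of size $\geq 4$. If $b=1$, then necessarily $a\geq 2$, and I merge the single factor $3$ with part of $2^a$: pull out $12$ when $a=2$, pull out $24$ when $a=3$, and write $2^a\cdot 3 = 12\cdot 2^{a-2}$ with $2^{a-2}\geq 4$ a prime power when $a\geq 4$. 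In every scenario $v$ is written as a product of factors drawn from $\{4,5,12,24\}$ together with prime powers $\geq 7$, and iterated use of Theorem~\ref{product-AOA} delivers AOA$(1,3,6,v)$. The main obstacle I anticipate is the handling of the two small prime powers $q=4$ and $q=5$, which the linear MDS construction above disposes of at once; the rest is a routine enumeration of cases.
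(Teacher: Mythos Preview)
Your proposal is correct and follows essentially the same strategy as the paper: factor $v$, build prime-power ingredients, absorb a lone factor of $3$ into $12$ or $24$ via Lemma~\ref{AOA(1,3,6,12)}, and assemble everything with Theorem~\ref{product-AOA}. The only difference is in sourcing the small prime-power blocks $q\in\{4,5\}$: the paper forward-references Theorems~\ref{AOAt-sGeq2} and~\ref{AOA(1,3,q+2,q)}, whereas you give a direct full-weight-codeword argument in a $[6,3,4]_q$ MDS code, which is essentially a special case of the paper's Theorem~\ref{linear AOA-MDS}.
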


\proof The result for $v\in \{12,24\}$ follows from
Lemma \ref{AOA(1,3,6,12)}. For other values of $v$,  write
$v=2^{\alpha}3^{\beta}p_1^{\gamma_1}p_2^{\gamma_1}\cdots
p_r^{\gamma_r}$ for its prime factorization, where $p_j\geq 5$. By
assumption, we know that $\alpha\neq 1$ and $(\alpha,\beta)\neq
(0,1)$. For $\beta\neq 1$, since there is a linear AOA$(1,3,3^{\beta}+1,3^{\beta})$ by Theorem \ref{AOAt-sGeq2}, deleting its $3^{\beta}-5$ columns  yields an AOA$(1,3,6,3^{\beta})$. Similarly, we can obtain an AOA$(1,3,6,p_j^{\gamma_j})$ from Theorem \ref{AOAt-sGeq2} and an AOA$(1,3,6,2^{\alpha})$ from Theorem \ref{AOA(1,3,q+2,q)}.
Applying Theorem \ref{product-AOA} gives an
AOA$(1,3,6,v)$. If $\beta=1$, then $\alpha\geq 2$. When $\alpha$ is
even, applying Theorem \ref{product-AOA} with the known AOA$(1,3,6,12)$ and
AOA$(1,3,6,2^{\alpha-2}p_1^{\gamma_1}p_2^{\gamma_1}\cdots
p_r^{\gamma_r})$ gives an AOA$(1,3,6,v)$.  When $\alpha$ is odd, applying Theorem
\ref{product-AOA} with the known AOA$(1,3,6,24)$
and AOA$(1,3,6,2^{\alpha-3}p_1^{\gamma_1}p_2^{\gamma_1}\cdots
p_r^{\gamma_r})$ gives an AOA$(1,3,6,v)$. \qed

\begin{lemma}
\label{AOA(1,3,6,15)}
There is an AOA$(1,3,6,v)$ for  $v\in \{15,21\}$.
\end{lemma}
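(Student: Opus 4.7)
The plan is to mimic the proof of Lemma~\ref{AOA(1,3,6,12)}. For each $n\in\{15,21\}$, the argument needs two ingredients: an $(n,4,1)$-DM $D=(d_{i,j})$ together with an adder $s=(s_1,\ldots,s_n)^T$ over some abelian group $G$ of order $n$, and a mapping $\sigma\colon G\to G$ for which $\{u+\sigma(u):u\in G\}=G$.

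First I would invoke \cite{JY2010} to produce an $(n,4,1)$-DM with an adder over $G$ for both $n=15$ and $n=21$; natural choices of group are $\mathbb{Z}_3\times\mathbb{Z}_5$ and $\mathbb{Z}_3\times\mathbb{Z}_7$ respectively (or simply $\mathbb{Z}_n$). Applying the construction from \cite{JY2010} recalled in the proof of Lemma~\ref{AOA(1,3,6,12)}, the array consisting of the row vectors
$$(d_{i,1}+u,d_{i,2}+u,d_{i,3}+u+e+s_i,d_{i,4}+u+e+s_i,e,e+s_i),$$
indexed by $1\le i\le n$ and $u,e\in G$, is then an OA$(3,6,n)$; only resolvability remains to be verified.

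Next I would exploit the fact that $n$ is odd. Since $\gcd(2,n)=1$, the doubling map $u\mapsto 2u$ is a bijection of $G$, so taking $\sigma$ to be the identity gives $\{u+\sigma(u):u\in G\}=\{2u:u\in G\}=G$. Following the template of Lemma~\ref{AOA(1,3,6,12)}, for each $1\le i\le n$ and each $e'\in G$ the subarray $A_{i,e'}$ consisting of the rows
$$(d_{i,1}+u,d_{i,2}+u,d_{i,3}+u+\sigma(u)+e'+s_i,d_{i,4}+u+\sigma(u)+e'+s_i,\sigma(u)+e',\sigma(u)+e'+s_i),$$
for $u\in G$, is an OA$(1,6,n)$: columns $1$ and $2$ are translates of $u$, columns $3$ and $4$ are translates of $2u$, and columns $5$ and $6$ are translates of $u$ and $u+s_i$; all six entries range over $G$ as $u$ does. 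The $n^2$ subarrays $\{A_{i,e'}:1\le i\le n,\ e'\in G\}$ partition the OA$(3,6,n)$, so it is $1$-resolvable, and Theorem~\ref{equivalence} then yields the desired AOA$(1,3,6,n)$.

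The only real obstacle is the availability of an $(n,4,1)$-DM with an adder for each of $n=15$ and $n=21$. If \cite{JY2010} does not already list these, they would need to be exhibited by a direct construction (for example by a short computer search over $\mathbb{Z}_3\times\mathbb{Z}_5$ and $\mathbb{Z}_3\times\mathbb{Z}_7$); once such a DM with adder is in hand, the remainder of the verification is mechanical and independent of the particular value of $n$.
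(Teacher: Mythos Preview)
Your proposal is correct and follows essentially the same approach as the paper. The paper works over $\mathbb{Z}_v$, cites \cite{JY2010} for the existence of a $(v,4,1)$-DM with an adder for $v\in\{15,21\}$, and then takes exactly your substitution $e=u+e'$ (i.e.\ $\sigma=\mathrm{id}$, using that $u\mapsto 2u$ is a bijection since $v$ is odd) to exhibit the resolution classes $A_{i,e'}$; so your hedging about whether \cite{JY2010} covers these two values is unnecessary.
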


\proof From \cite{JY2010}, there is a $(v,4,1)$-DM $D$ with an adder $s$ over $\mathbb{Z}_{v}$, and the array consisting of the following vectors
$$(d_{i,1}+u,d_{i,2}+u,d_{i,3}+u+e+s_i,d_{i,4}+u+e+s_i,e,e+s_i)$$
where $1\leq i\leq v$ and $u,e\in \mathbb{Z}_{v}$, is an OA$(3,6,v)$.  By Theorem \ref{equivalence}, we only need to show that this OA is resolvable.

Clearly, for $1\leq i\leq v$ and $e'\in \mathbb{Z}_{v}$, the subarray $A_{i,e'}$ consisting of the vectors $(d_{i,1}+u,d_{i,2}+u,d_{i,3}+u+u+e',d_{i,4}+u+u+e',u+e',u+e'+s_i)$, $u\in \mathbb{Z}_{v}$, is an OA$(1,6,v)$. Hence, this OA is resolvable. \qed

Applying Theorem \ref{product-AOA} with the known AOAs in Lemma \ref{AOA(1,3,5,odd)} and Theorem \ref{AOA(1,3,6,v)} yields the following.

\begin{theorem}
\label{AOA(1,3,5,v)}
Let $v\geq 4$ be an integer. If $v\not\equiv 2 \pmod 4$, then an AOA$(1,3, 5, v)$ exists.
\end{theorem}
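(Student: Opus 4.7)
The plan is to split on the residue of $v$ modulo $4$. Since $v\geq 4$ and $v\not\equiv 2\pmod 4$, the only two possibilities are that $v$ is odd (hence $v\geq 5$) or that $v\equiv 0\pmod 4$; I will handle these separately.

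In the odd case I would appeal directly to Lemma \ref{AOA(1,3,5,odd)}, which produces an AOA$(1,3,5,v)$ for every odd $v\geq 4$, so no further argument is needed there.

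In the case $v\equiv 0\pmod 4$ my plan is to invoke Theorem \ref{AOA(1,3,6,v)} to construct an AOA$(1,3,6,v)$ and then delete a single column. To apply that theorem I must check its two gcd hypotheses. First, $\gcd(v,4)=4\neq 2$ is immediate. Second, since $v$ is even every common divisor of $v$ and $18$ is even, so $\gcd(v,18)\in\{2,6,18\}$ and in particular never equals $3$. Hence Theorem \ref{AOA(1,3,6,v)} yields an AOA$(1,3,6,v)$. Removing any one of its first six columns then produces an AOA$(1,3,5,v)$; this column-deletion step is justified by the remark after Theorem \ref{equivalence}, where it is noted that deleting columns of an $s$-resolvable OA$(t,k,v)$ keeps it $s$-resolvable and therefore, via Theorem \ref{equivalence}, turns an AOA$(s,t,k,v)$ into an AOA$(s,t,k-1,v)$.

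The only place to take care is the case analysis itself: one must verify that these two cases exhaust the hypothesis and, in particular, that the case $v\equiv 0\pmod 4$ needs no further subdivision (for instance, no special treatment of $v=12$ or $v=4\cdot 3^{\beta}$). Theorem \ref{AOA(1,3,6,v)} already packages all such $v$ in a single statement, since the gcd check above is uniform in $v$. Hence, although the sentence preceding the theorem advertises Theorem \ref{product-AOA} as an additional tool, at this outer level no second application of the product construction is actually required --- it has already been absorbed into Theorem \ref{AOA(1,3,6,v)} to assemble AOA$(1,3,6,v)$ from its prime-power components. The proof therefore reduces to a short two-case distinction with no genuine obstacle.
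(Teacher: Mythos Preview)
Your proof is correct and follows essentially the same route as the paper: the paper's one-sentence argument simply points to Lemma~\ref{AOA(1,3,5,odd)} and Theorem~\ref{AOA(1,3,6,v)}, and you have made the implicit two-case split (odd versus $v\equiv 0\pmod 4$) and the column-deletion step explicit, together with the verification of the two $\gcd$ hypotheses. Your observation that Theorem~\ref{product-AOA} is not needed at the outer level is also accurate---it is already used inside the proof of Theorem~\ref{AOA(1,3,6,v)}.
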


 Blanchard proved that for  any positive integers $t$ and $k$, $t \leq k$, there is an integer $v^* = v^*(t, k)$ such that for any integer $v\geq v^*$ there is an OA$(t, k, v)$ \cite{Blanchard1994}. Such a fact was pointed by Moh\'{a}csy in \cite{Mohacsy2013}. By  Blanchard's asymptotic existence result  and Theorem \ref{OA-AOA}, asymptotic existence result for AOAs also holds.

 \begin{theorem}
For any positive integers $s,t,k$ with $s< t\leq k$, there is an integer  $v^* = v^*(t, k+t-s)$ such that there is an AOA$(s,t, k, v)$ for any integer $v\geq v^*$.
\end{theorem}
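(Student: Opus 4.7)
The plan is to apply Blanchard's asymptotic existence theorem for orthogonal arrays directly, then invoke Theorem \ref{OA-AOA} to convert the OA into an AOA. First I would set $t' = t$ and $k' = k + t - s$; since $s < t \leq k$ we have $s \leq k$, so $t \leq k + t - s = k'$, which means Blanchard's result is applicable to the pair $(t', k')$.

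Next, by Blanchard's theorem there exists an integer $v^* = v^*(t, k+t-s)$ such that for every integer $v \geq v^*$, an OA$(t, k+t-s, v)$ exists. I would then appeal to Theorem \ref{OA-AOA}, which asserts that whenever an OA$(t, k + t - s, v)$ exists, an AOA$(s, t, k, v)$ also exists. Combining these two facts yields an AOA$(s, t, k, v)$ for every $v \geq v^*(t, k+t-s)$, exactly as claimed.

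There is essentially no obstacle here: both ingredients (Blanchard's asymptotic existence and the OA-to-AOA construction) are already in hand, and the only thing to verify is the numerical condition $t \leq k + t - s$, which is immediate from the hypothesis $s < t \leq k$. The proof is therefore a short two-line deduction, and its role in the paper is primarily to record the asymptotic consequence of the constructive results rather than to introduce any new technique.
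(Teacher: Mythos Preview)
Your proposal is correct and follows precisely the same route as the paper: apply Blanchard's asymptotic existence result to obtain an OA$(t,k+t-s,v)$ for all sufficiently large $v$, then invoke Theorem~\ref{OA-AOA} to convert it into an AOA$(s,t,k,v)$. The paper in fact does not even spell out the argument formally; it simply states that the theorem follows from Blanchard's result together with Theorem~\ref{OA-AOA}, so your write-up is, if anything, more detailed than the original.
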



\section{Linear AOAs}

Stinson \cite{Stinson2017} presented an effective construction of linear AOAs as follows.

\begin{construction}
[\cite{Stinson2017}]
\label{linear AOA}
 Suppose that $q$ is a prime power. Suppose that there is a $t$ by $k + t-s$
matrix $M$, having entries from the finite field $\mathbb{F}_q$ of order $q$, which satisfies the following two properties:

$1.$ any $t$ of the first $k$ columns of $M$ are linearly independent, and

$2.$ any $s$ of the first $k$ columns of $M$, along with the last $t-s$ columns of $M$, are
linearly independent.

\noindent Then there exists a linear AOA$(s, t, k, q)$.
\end{construction}

Let $M$ be a $t$ by $k + t-s$ matrix over $\mathbb{F}_q$ satisfying the two properties in Construction \ref{linear AOA}. By linear algebra theory, there is a $t$ by $t$ invertible  matrix $P$ over $\mathbb{F}_q$ such that $PM$ is of the form
\[
\left ( \begin{array}{cc}
M_1 & 0 \\
M_2 & E_{t-s}\\
\end{array}
\right ),
\]
where $M_1$ is an $s$ by $k$ matrix, $M_2$ is a $t-s$ by $k$ matrix and $E_{t-s}$ is an identity matrix of order $t-s$. Since $P$ is invertible and $M$ has the two properties, the submatrix $\left ( \begin{array}{c}
M_1  \\
M_2 \\
\end{array}
\right )$ has the following properties:   (1') any $t$ columns  are linearly independent, and (2')
 any $s$  columns of $M_1$ are linearly independent. Conversely, if there is a $t$ by $k$ matrix  $\left ( \begin{array}{c}
M_1  \\
M_2 \\
\end{array} \right ) $ with  these two properties, then  the matrix $\left ( \begin{array}{cc}
M_1 & 0 \\
M_2 & E_{t-s}\\
\end{array}
\right )$  satisfies the properties in Construction \ref{linear AOA}. Therefore,  Construction \ref{linear AOA} can be rewritten as follows.

\begin{construction}
\label{linear AOA'}
 Suppose that $q$ is a prime power. Suppose there is a $t$ by $k$
matrix $M$, having entries from $\mathbb{F}_q$, which satisfies the following two properties:

$(1)$ any $t$  columns of $M$ are linearly independent, and

$(2)$ there are $s$ rows such that any $s$ columns of these $s$ rows are
linearly independent.

\noindent Then there exists a linear AOA$(s, t, k, q)$.
\end{construction}

A maximum distance separable code (MDS code) of length $k$ and size $v^t$ over an alphabet $X$ of size $v$ is a set of $v^t$ vectors (called codewords) in $X^k$, having the property that the hamming distance between any two codewords is at least $k-t+1$. It is well known that an OA$(t,k,v)$ is equivalent to an MDS code of length $k$ and size $v^t$ over an alphabet  of size $v$. It is also well known that  a linear OA$(t,k,q)$ defined over $\mathbb{F}_q$ is equivalent to a linear MDS code of length $k$ and dimension $t$ over $\mathbb{F}_q$. Also,  the dual code of a linear MDS code of length $k$ and dimension $t$ over $\mathbb{F}_q$ is a linear MDS code of length $k$ and dimension $k-t$ over $\mathbb{F}_q$. A linear code of length $k$  and dimension $t$ over $\mathbb{F}_q$ generated by the $t$ by $k$ matrix $M$ is a linear MDS code  if and only if  any $t$ columns of $M$ are linearly independent. There is a linear OA$(t,q+1,q)$ for any $t\leq q$.
For more information on linear MDS codes, we refer the reader to \cite{Roth2006}.

If a  $t$ by $k$ matrix $M$ over $\mathbb{F}_q$ satisfies the properties in Construction \ref{linear AOA'}, then there is a linear MDS code of length $k$ and dimension $t$ over $\mathbb{F}_q$ which contains a linear MDS subcode of length $k$ and dimension $s$ over $\mathbb{F}_q$. Conversely, if there is a linear MDS code of length $k$ and dimension $t$ over $\mathbb{F}_q$ which contains a linear MDS subcode of length $k$ and dimension $s$ over $\mathbb{F}_q$, there is a  $t$ by $k$ matrix $M$ over $\mathbb{F}_q$ satisfies the properties in Construction \ref{linear AOA'}. Therefore, a linear AOA can be characterized in terms of a specially linear MDS code.

\begin{theorem}
\label{linear AOA-MDS}
There is a linear AOA$(s,t,k,q)$ if and only if there is a linear MDS code of length $k$ and dimension $t$ over $\mathbb{F}_q$ which contains a linear MDS subcode of length $k$ and dimension $s$ over $\mathbb{F}_q$.
\end{theorem}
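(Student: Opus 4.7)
The plan is to prove both directions using Construction \ref{linear AOA'} as the pivot, since it already translates the existence of a linear AOA$(s,t,k,q)$ into the existence of a $t\times k$ matrix $M$ over $\mathbb{F}_q$ satisfying properties $(1)$ and $(2)$; moreover, the paragraphs preceding the theorem have already established that this translation is an equivalence, not merely a sufficient condition. Thus the whole task reduces to recognising that those two matrix conditions are precisely the generator-matrix incarnations of ``a linear MDS code of dimension $t$ containing a linear MDS subcode of dimension $s$''.

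For the forward direction I would start from a linear AOA$(s,t,k,q)$ and, via the normal-form argument given between Constructions \ref{linear AOA} and \ref{linear AOA'}, obtain a $t\times k$ matrix $M$ satisfying $(1)$ and $(2)$. The row space $C$ of $M$ is a linear code of length $k$ and dimension $t$, and condition $(1)$---any $t$ columns are linearly independent---is precisely the statement that $C$ is MDS. Let $M_1$ be the $s\times k$ submatrix guaranteed by $(2)$; its row space $C'$ is then a linear subcode of $C$ of dimension $s$, and condition $(2)$ says that any $s$ columns of $M_1$ are linearly independent, i.e.\ $C'$ is MDS.

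For the converse I would take a linear MDS code $C$ of length $k$ and dimension $t$ containing a linear MDS subcode $C'$ of dimension $s$, pick a basis $v_1,\dots,v_s$ of $C'$, extend it to a basis $v_1,\dots,v_t$ of $C$, and let $M$ be the $t\times k$ matrix with these vectors as its rows in that order. Property $(1)$ follows because $C$ is MDS, and property $(2)$ follows because $C'$ is MDS, taking the distinguished $s$ rows to be the top $s$ rows of $M$ (which generate $C'$). Construction \ref{linear AOA'} then produces the desired linear AOA$(s,t,k,q)$. There is no serious obstacle here: the theorem is essentially a dictionary entry translating the matrix conditions of Construction \ref{linear AOA'} into the language of MDS codes, and the only mildly technical point---that a basis of the subcode may always be extended to one of the full code with the subcode basis written first---is routine linear algebra.
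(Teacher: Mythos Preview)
Your proposal is correct and follows essentially the same approach as the paper: the paper's ``proof'' is in fact the short paragraph immediately preceding the theorem statement, which observes that the matrix conditions $(1)$ and $(2)$ of Construction~\ref{linear AOA'} are equivalent to the existence of a linear MDS code of dimension $t$ containing a linear MDS subcode of dimension $s$, exactly as you describe. Your write-up is more detailed---in particular you spell out the basis-extension step for the converse---but the underlying argument is identical.
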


Stinson has proved that if there is a linear OA$(t-s,t,q)$ then there is a linear AOA$(s,t,t,q)$ \cite{Stinson2017}. We can give an alternative proof by Theorem \ref{linear AOA-MDS}.
When there is  a linear OA$(t-s,t,q)$, there is a linear MDS code of length $t$ and dimension $t-s$. Its dual code ${\cal C}$ is a linear MDS code  of length $t$ and dimension $s$. Since the trivial  linear MDS code $\mathbb{F}_q^t$ of length $t$ and dimension $t$ contains ${\cal C}$, there is a linear  AOA$(s,t,t,q)$ by Theorem \ref{linear AOA-MDS}.

\begin{theorem}
If there is a linear OA$(s,t,q)$, then there is a linear AOA$(s,t,t,q)$ and a linear AOA$(t-s,t,t,q)$.
\end{theorem}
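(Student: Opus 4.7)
The plan is to derive both conclusions directly from Theorem~\ref{linear AOA-MDS} by exhibiting the required containment of a linear MDS subcode inside the trivial ambient code $\mathbb{F}_q^t$, which has length $t$ and dimension $t$ and is itself a (trivial) linear MDS code.

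First I would translate the hypothesis: a linear OA$(s,t,q)$ is, by the standard equivalence recalled in the paragraph preceding Theorem~\ref{linear AOA-MDS}, the same thing as a linear MDS code $\mathcal C$ of length $t$ and dimension $s$ over $\mathbb{F}_q$. Since $\mathcal C \subseteq \mathbb{F}_q^t$ and $\mathbb{F}_q^t$ is the full space, $\mathbb{F}_q^t$ qualifies as a linear MDS code of length $t$ and dimension $t$ that contains the linear MDS subcode $\mathcal C$ of length $t$ and dimension $s$. Applying Theorem~\ref{linear AOA-MDS} with $k=t$ yields a linear AOA$(s,t,t,q)$.

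For the second assertion, I would pass to the dual code. Because $\mathcal C$ is a linear MDS code of length $t$ and dimension $s$, its dual $\mathcal C^{\perp}$ is a linear MDS code of length $t$ and dimension $t-s$ (this is the standard fact on duals of linear MDS codes cited in the excerpt). Once again $\mathcal C^{\perp} \subseteq \mathbb{F}_q^t$, so $\mathbb{F}_q^t$ is a linear MDS code of length $t$ and dimension $t$ containing the linear MDS subcode $\mathcal C^{\perp}$ of length $t$ and dimension $t-s$. A second application of Theorem~\ref{linear AOA-MDS}, now with the parameter playing the role of $s$ equal to $t-s$, produces a linear AOA$(t-s,t,t,q)$.

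There is essentially no obstacle: the proof is a two-line bookkeeping argument that mirrors exactly the alternative proof given immediately before the theorem, with the only novelty being the observation that one does not even need to dualise in the first half (the code $\mathcal C$ itself already sits inside the trivial ambient MDS code), while the dualisation step is precisely what delivers the second half. The main thing to get right is the matching of parameters in Theorem~\ref{linear AOA-MDS}, in particular that the ambient code has length $k=t$ and dimension $t$, so that one is genuinely producing an AOA with $k=t$.
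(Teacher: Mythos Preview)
Your proof is correct and follows essentially the same route as the paper: the theorem is stated without a separate proof because it is an immediate consequence of the preceding paragraph, which embeds a linear MDS code (or its dual) inside the trivial ambient code $\mathbb{F}_q^t$ and invokes Theorem~\ref{linear AOA-MDS}. Your argument makes this explicit, including the observation that for the first half no dualisation is needed.
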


\begin{theorem}
If there is a linear AOA$(s,t,k,q)$, then there is a linear AOA$(k-t,k-s,k,q)$.
\end{theorem}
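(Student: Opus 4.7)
The plan is to reduce the statement to a purely coding-theoretic claim via Theorem \ref{linear AOA-MDS} and then invoke the duality theory of linear MDS codes.

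First, I would translate the hypothesis. By Theorem \ref{linear AOA-MDS}, a linear AOA$(s,t,k,q)$ is equivalent to the existence of a pair of linear MDS codes $\mathcal{C}_s\subseteq\mathcal{C}_t$ over $\mathbb{F}_q$, both of length $k$, with $\dim\mathcal{C}_s=s$ and $\dim\mathcal{C}_t=t$. Similarly, a linear AOA$(k-t,k-s,k,q)$ corresponds to a pair of linear MDS codes $\mathcal{D}_{k-t}\subseteq\mathcal{D}_{k-s}$ of length $k$, with dimensions $k-t$ and $k-s$ respectively.

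Next I would apply the standard duality facts recalled just before Theorem \ref{linear AOA-MDS}: the dual $\mathcal{C}^\perp$ of a linear MDS code of length $k$ and dimension $r$ over $\mathbb{F}_q$ is again a linear MDS code of length $k$, now of dimension $k-r$; and inclusions of codes reverse under duality, so $\mathcal{C}_s\subseteq\mathcal{C}_t$ implies $\mathcal{C}_t^\perp\subseteq\mathcal{C}_s^\perp$. Setting $\mathcal{D}_{k-s}:=\mathcal{C}_s^\perp$ and $\mathcal{D}_{k-t}:=\mathcal{C}_t^\perp$ gives exactly the required nested pair: $\mathcal{D}_{k-s}$ is a linear MDS code of length $k$ and dimension $k-s$, and $\mathcal{D}_{k-t}$ is a linear MDS subcode of length $k$ and dimension $k-t$. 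Applying Theorem \ref{linear AOA-MDS} in the reverse direction then yields the desired linear AOA$(k-t,k-s,k,q)$.

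There is essentially no obstacle here beyond bookkeeping; the proof is a two-line application of MDS duality once Theorem \ref{linear AOA-MDS} is in hand. The only thing worth double-checking is that the inequalities needed implicitly in the statement, namely $0\le k-t<k-s\le k$, follow from the standing assumption $0\le s<t\le k$ that accompanies any AOA parameter set, so the target parameters are legitimate.
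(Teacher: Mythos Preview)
Your proof is correct and follows essentially the same approach as the paper: translate via Theorem \ref{linear AOA-MDS} to a nested pair of linear MDS codes, take duals to reverse the inclusion and complement the dimensions, and translate back. The paper's own proof is the same two-line duality argument, just with slightly different notation.
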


\proof By assumption and Theorem \ref{linear AOA-MDS}, there is a a linear MDS code ${\cal C}$ of length $k$ and dimension $t$ over $\mathbb{F}_q$ which contains a linear MDS subcode ${\cal C}'$ of length $k$ and dimension $s$ over $\mathbb{F}_q$. The dual code of ${\cal C}'$ contains the dual code of ${\cal C}$.  Since the dual code of ${\cal C}'$ is a linear MDS code of length $k$ and dimension $k-s$ and the dual code of ${\cal C}'$ is a linear MDS code of length $k$ and dimension $k-t$, there is a linear MDS code of length $k$ and dimension $k-s$ containing a linear MDS subcode of length $k$ and dimension $k-t$.  The conclusion then follows from Theorem \ref{linear AOA-MDS}. \qed

\begin{theorem}
\label{AOA(1,k-1,k,q)}
Suppose that $q>2$ is a prime power and $k\geq 3$ is an integer.  There is a linear AOA$(1,k-1,k,q)$.
\end{theorem}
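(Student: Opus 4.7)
The plan is to invoke Theorem \ref{linear AOA-MDS}: it suffices to construct a linear MDS code $\mathcal{C}$ of length $k$ and dimension $k-1$ over $\mathbb{F}_q$ that contains a linear MDS subcode $\mathcal{C}'$ of length $k$ and dimension $1$. A linear $[k,k-1]_q$ code is MDS precisely when its one-dimensional dual is spanned by a vector $(d_1,\ldots,d_k)$ with all coordinates nonzero, since having minimum distance $2$ is equivalent to forbidding standard-basis codewords. A one-dimensional subcode of $\mathcal{C}$ is MDS if and only if it is spanned by a vector all of whose coordinates are nonzero. I would therefore take $\mathcal{C}$ to be the parity-check code $\{x \in \mathbb{F}_q^k : d_1 x_1 + \cdots + d_k x_k = 0\}$ for a yet-to-be-chosen $d$ with nonzero entries, and fix $\mathcal{C}' = \langle (1,1,\ldots,1)\rangle$. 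The containment $\mathcal{C}' \subseteq \mathcal{C}$ then collapses to the single scalar equation $d_1 + d_2 + \cdots + d_k = 0$.

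The theorem therefore reduces to the elementary claim: for $q > 2$ and $k \geq 3$, there exist $d_1,\ldots,d_k \in \mathbb{F}_q^*$ whose sum is zero. I would prove this by a short case analysis on the characteristic $p$ of $\mathbb{F}_q$. If $p \mid k$, simply take $d_i = 1$ for every $i$, so that $\sum d_i = k = 0$ in $\mathbb{F}_q$. Otherwise $p \nmid k$, and I would set $d_1 = \cdots = d_{k-2} = 1$ and choose a pair $(\alpha,\beta) \in (\mathbb{F}_q^*)^2$ with $\alpha + \beta = 2 - k$, placing $d_{k-1} = \alpha$ and $d_k = \beta$.

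The only real work is verifying that such a pair $(\alpha,\beta)$ exists in the second case, and this is exactly where the hypothesis $q > 2$ is used. If $2 - k = 0$ in $\mathbb{F}_q$, any $\alpha \in \mathbb{F}_q^*$ with $\beta = -\alpha$ suffices. Otherwise $2 - k \in \mathbb{F}_q^*$, and the set $\mathbb{F}_q^* \setminus \{2-k\}$ contains $q - 2 \geq 1$ elements, yielding a valid $\alpha$ with $\beta = (2-k) - \alpha$ automatically nonzero. For $q = 2$ this second subcase would require a nonzero element of $\mathbb{F}_2$ distinct from $1$, which does not exist; this is precisely the obstruction that forces $q > 2$ in the hypothesis.
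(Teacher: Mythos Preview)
Your proof is correct and follows essentially the same approach as the paper. Both arguments invoke Theorem~\ref{linear AOA-MDS} and reduce the problem to finding a vector in $(\mathbb{F}_q^*)^k$ whose coordinates sum to zero; the only cosmetic difference is that the paper fixes the parity check to be $(1,\ldots,1)$ and searches for an all-nonzero codeword $(\alpha+k-2,-\alpha,-1,\ldots,-1)$ to span $\mathcal{C}'$, whereas you fix $\mathcal{C}'=\langle(1,\ldots,1)\rangle$ and search for an all-nonzero parity-check vector $d$ --- the two roles are interchangeable and the underlying existence problem is identical.
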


\proof Clearly, there is an element $\alpha \in \mathbb{F}_q\setminus \{0\}$ such that $\alpha +k-2\neq 0$. It is well known that the dual code ${\cal C}$ of the linear MDS code of length $k$ generated by $(1,1,\ldots, 1)$    is a linear MDS code of length $k$ and dimension $k-1$. Furthermore, ${\cal C}$ contains a codeword $(\alpha +k-2,-\alpha,-1,-1,\ldots, -1)$. Therefore, ${\cal C}$ contains a linear MDS subcode of length $k$ and dimension 1. The conclusion then  follows from Theorem \ref{linear AOA-MDS}. \qed

 \begin{theorem}
 \label{AOAt-sGeq2}
Suppose that  $q\geq 3$ is a prime power and that $s,t$ are  integers with $1\leq s<t\leq q$  and  $t-s\geq 2$.
There exists a linear AOA$(s,t,q+1,q)$.
\end{theorem}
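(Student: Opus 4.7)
The plan is to invoke Theorem \ref{linear AOA-MDS} and exhibit a linear $[q+1, t]$ MDS code $\mathcal{C}$ over $\mathbb{F}_q$ that contains a linear $[q+1, s]$ MDS subcode $\mathcal{C}'$. For $\mathcal{C}$ I would take the standard extended Reed--Solomon code: writing $\mathbb{F}_q = \{a_1, \ldots, a_q\}$, the codeword associated to a polynomial $h \in \mathbb{F}_q[x]$ of degree less than $t$ is $(h(a_1), \ldots, h(a_q), h_{t-1})$, where $h_{t-1}$ denotes the coefficient of $x^{t-1}$ in $h$. This is the well-known $[q+1, t, q-t+2]$ MDS code.

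The crux of the construction is to use a polynomial multiplier in order to embed a dimension-$s$ Reed--Solomon code into $\mathcal{C}$ while preserving the MDS property. Since $t-s \geq 2$, I can choose a monic polynomial $g(x) \in \mathbb{F}_q[x]$ of degree $t-s$ with no roots in $\mathbb{F}_q$ (for example, any monic irreducible polynomial of degree $t-s$, whose existence over $\mathbb{F}_q$ is guaranteed by the standard count of irreducibles). I then set
\[
\mathcal{C}' = \{(g(a_1)f(a_1), \ldots, g(a_q)f(a_q), f_{s-1}) : f \in \mathbb{F}_q[x],\ \deg f < s\}.
\]
Because $\deg(gf) \leq t-1$ and $g$ is monic, the coefficient of $x^{t-1}$ in $gf$ equals $f_{s-1}$, so $\mathcal{C}' \subseteq \mathcal{C}$. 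The map $f \mapsto (g(a_1)f(a_1), \ldots, g(a_q)f(a_q), f_{s-1})$ is linear, and it is injective because the vanishing of all $g(a_i)f(a_i)$ forces $f(a_i) = 0$ for every $i$ (using $g(a_i) \neq 0$), and a polynomial of degree less than $s \leq q$ with $q$ roots in $\mathbb{F}_q$ must be identically zero. Hence $\dim \mathcal{C}' = s$.

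Finally I would check that every nonzero codeword of $\mathcal{C}'$ has weight at least $q-s+2$, which is the Singleton bound and thus certifies that $\mathcal{C}'$ is MDS. A codeword of $\mathcal{C}'$ vanishes at position $i \leq q$ iff $f(a_i) = 0$, and at the last position iff $f_{s-1} = 0$. If $f_{s-1} \neq 0$, then $\deg f = s-1$ and $f$ has at most $s-1$ roots in $\mathbb{F}_q$, so the codeword has at most $s-1$ zeros. If $f_{s-1} = 0$, then $\deg f \leq s-2$, contributing at most $s-2$ roots plus one zero in the last coordinate, again at most $s-1$ zeros. Either way the weight is at least $(q+1)-(s-1) = q-s+2$, so $\mathcal{C}'$ is MDS and Theorem \ref{linear AOA-MDS} delivers the desired linear AOA. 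The main conceptual obstacle is recognizing that the naive embedding, simply taking polynomials of degree less than $s$ inside polynomials of degree less than $t$, fails because the last coordinate then collapses to zero; the multiplier $g(x)$ having no roots in $\mathbb{F}_q$ is precisely what repairs this defect.
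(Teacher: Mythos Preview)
Your proof is correct and is essentially the same construction as the paper's, just phrased via Theorem~\ref{linear AOA-MDS} (the MDS-code characterization) rather than via Construction~\ref{linear AOA'} (the generator-matrix criterion). Indeed, the paper builds a $t\times(q+1)$ matrix whose first $t-s$ rows are the evaluations of $1,x,\ldots,x^{t-s-1}$ and whose last $s$ rows are the evaluations of $h(x),xh(x),\ldots,x^{s-1}h(x)$ (with a final column $(0,\ldots,0,1)^T$), where $h$ is monic irreducible of degree $t-s$; the row space of this matrix is exactly your extended Reed--Solomon code $\mathcal{C}$, and the span of the last $s$ rows is exactly your subcode $\mathcal{C}'$ with $g=h$.
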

\proof Denote $\mathbb{F}_q=\{a_1,a_2,\ldots,a_q\}$. Take a monic irreducible polynomial $h(x)$ of degree $t-s$ over $\mathbb{F}_q$ and define
 \[ M=\left(
    \begin{array}{cccccc}
      1 &1 &  \cdots & 1 & 0 \smallskip \\
      a_1&a_2 & \cdots & a_q & 0 \smallskip \\
      \vdots&\vdots & \ddots& \vdots & \vdots \smallskip \\
      a_1^{t-s-1} &a_2^{t-s-1} &  \cdots & a_q^{t-s-1}&0  \smallskip\\
      h(a_1) & h(a_2) &  \cdots & h(a_q) &0  \smallskip\\
      a_1h(a_1) & a_2h(a_2) & \cdots & a_qh(a_q) &0  \smallskip\\
     \vdots&\vdots &   \ddots& \vdots & \vdots \smallskip \\
    a_1^{s-1}h(a_1) & a_2^{s-1}h(a_2) &  \cdots & a_q^{s-1}h(a_q) &1  \smallskip\\
    \end{array}
  \right).\]
Note that for $s=1$, the last row of $M$ is   $(h(a_1), h(a_2),  \ldots, h(a_q),1)$.

  Since $h(x)$ is a monic irreducible polynomial  of degree $t-s$ over $\mathbb{F}_q$, each of $h(a_1),h(a_2),\ldots, h(a_q)$ is nonzero.
It is easy to see from Vandermoode determinant
 that any $t$ columns of $M$  are linearly independent and that the last $s$ rows satisfy the property (2) of Construction \ref{linear AOA'}.  Applying Construction \ref{linear AOA'} yields the result. \qed

For $t-s=1$, by Theorem \ref{AOA(t-1,t,k,v)}  an AOA$(t-1,t,q+1,q)$ implies an OA$(t,q+2,q)$. It is well known that there is a linear OA$(3,q+2,q)$ when $q\geq 4$ is a prime power of 2. However,  for other parameters $t,q$, no  OA$(t,q+2,q)$ has been found.

 \begin{theorem}
 \label{AOA(2,3,q+1,q)}
Suppose that  $q\geq 4$ is a prime power of $2$.
There exists a linear AOA$(2,3,q+1,q)$.
\end{theorem}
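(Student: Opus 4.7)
The plan is to apply Construction \ref{linear AOA'} with $(s,t,k)=(2,3,q+1)$, so that I need to exhibit a $3\times(q+1)$ matrix $M$ over $\mathbb{F}_q$ in which (1) every three columns are linearly independent, and (2) some two rows have the property that every two of their columns are linearly independent. Writing $\mathbb{F}_q=\{a_1,\ldots,a_q\}$, my candidate is
\[
M=\begin{pmatrix} 1 & 1 & \cdots & 1 & 0 \\ a_1 & a_2 & \cdots & a_q & 1 \\ a_1^2 & a_2^2 & \cdots & a_q^2 & 0 \end{pmatrix},
\]
which arises by truncating the classical hyperoval $\{(1,x,x^2):x\in\mathbb{F}_q\}\cup\{(0,1,0),(0,0,1)\}$ in $\mathrm{PG}(2,q)$ to $q+1$ of its $q+2$ points. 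This matrix has the same flavor as the matrix used in Theorem \ref{AOAt-sGeq2}, the difference being that the infinite column $(0,1,0)^T$ now plays a role that is crucially sensitive to the characteristic.

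To verify property (1) I would split into the two possible types of triples. Three columns chosen among the first $q$ form a $3\times 3$ Vandermonde block with nonzero determinant $\prod_{i<j}(a_j-a_i)$. Two such columns together with the last column give
\[
\det\begin{pmatrix}1&1&0\\ a_i&a_j&1\\ a_i^2&a_j^2&0\end{pmatrix}=a_i^2-a_j^2=(a_i-a_j)(a_i+a_j),
\]
and this is precisely where the hypothesis that $q$ is a power of $2$ enters: in characteristic $2$ the equation $a_i+a_j=0$ forces $a_i=a_j$, so the determinant is nonzero whenever the two affine columns are distinct. For property (2) I would take the first two rows, obtaining the $2\times(q+1)$ matrix with columns $(1,a_i)^T$ and $(0,1)^T$; two distinct affine columns have determinant $a_j-a_i\neq 0$, and an affine column paired with $(0,1)^T$ has determinant $1$. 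Invoking Construction \ref{linear AOA'} then produces the desired linear AOA$(2,3,q+1,q)$.

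The main (and essentially the only) obstacle is pinpointing where the characteristic-$2$ hypothesis must be used: Vandermonde-type determinants behave uniformly in every characteristic, but the factor $a_i+a_j$ arising from the mixed triples forces us into a field where $-1=1$, which is exactly why the classical hyperoval requires $q$ to be even and why the construction fails in odd characteristic. Once this observation is in place, the rest of the verification reduces to routine $3\times 3$ and $2\times 2$ determinant computations.
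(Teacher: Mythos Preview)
Your proof is correct and follows essentially the same approach as the paper: both exhibit an extended Vandermonde matrix and verify the two conditions of Construction~\ref{linear AOA'}. The only cosmetic difference is that the paper appends the column $(0,0,1)^T$ (the standard extension of the Reed--Solomon generator) and uses rows~1 and~3 for property~(2), so that the characteristic-$2$ hypothesis enters when checking the $2\times2$ minors $a_j^2-a_i^2=(a_j-a_i)^2$; you instead append $(0,1,0)^T$ and use rows~1 and~2, which shifts the same factor $a_i^2-a_j^2$ into the verification of property~(1).
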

\proof Denote $\mathbb{F}_q=\{a_1,a_2,\ldots,a_q\}$ and define
 \[ M=\left(
    \begin{array}{cccccc}
      1 &1 &  \cdots & 1 & 0 \smallskip \\
      a_1&a_2 &  \cdots & a_q & 0 \smallskip \\
      a_1^{2} &a_2^{2} &  \cdots & a_q^{2}& 1 \smallskip\\
    \end{array}
  \right).\]

It is easy to see that
 the matrix $M$ satisfies the property (1) of
Construction \ref{linear AOA'} and the submatrix consisting of the first row and the last row also satisfies the property (2) of
Construction \ref{linear AOA'}.  Therefore, there is a linear AOA having the stated parameters.  \qed

\begin{theorem}
\label{linear AOA(1,3,q+2,q)}
\label{AOA(1,3,q+2,q)}
If $q\geq 4$ is a
power of $2$ then  there is a linear AOA$(1,3,q+2,q)$.
\end{theorem}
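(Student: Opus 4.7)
My plan is to invoke Construction \ref{linear AOA'} with $s=1$, $t=3$, $k=q+2$. The task reduces to exhibiting a $3\times(q+2)$ matrix $M$ over $\mathbb{F}_q$ such that (1) every three columns are linearly independent, and (2) some row has all entries nonzero.

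I would start from the Bush-type generator matrix that underlies the existence of OA$(3,q+2,q)$ for $q$ a power of $2$, namely
\[
M_0 = \begin{pmatrix}
1 & 1 & \cdots & 1 & 0 & 0 \\
a_1 & a_2 & \cdots & a_q & 1 & 0 \\
a_1^2 & a_2^2 & \cdots & a_q^2 & 0 & 1
\end{pmatrix},
\]
where $\{a_1,\ldots,a_q\}=\mathbb{F}_q$. The check that $M_0$ satisfies property (1) is the same one behind Theorem \ref{OA-primepower} in characteristic $2$: minors involving only columns from the first $q$ are Vandermonde, while those involving one or both of the last two columns reduce, after cofactor expansion, to a difference $a_i-a_j$ or to $a_i^2-a_j^2=(a_i-a_j)^2$, which is nonzero for distinct $a_i,a_j$ because squaring is a bijection of $\mathbb{F}_q$.

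Since each row of $M_0$ contains a $0$, property (2) fails as written. My fix is a row replacement: replace $R_1$ by a linear combination $\lambda_1R_1+\lambda_2R_2+\lambda_3R_3$ with $\lambda_1\ne 0$ (so the row operation is invertible, and property (1) is preserved). The new first row has entries $\lambda_3a_i^2+\lambda_2a_i+\lambda_1$ for $i=1,\dots,q$ together with $\lambda_2$ and $\lambda_3$, so I must find nonzero scalars $\lambda_1,\lambda_2,\lambda_3$ such that the quadratic $f(x)=\lambda_3x^2+\lambda_2x+\lambda_1$ has no root in $\mathbb{F}_q$.

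The main obstacle is this last existence question, which is really an Artin--Schreier statement in characteristic $2$. The substitution $y=(\lambda_3/\lambda_2)x$ transforms $f$ into a scalar multiple of $y^2+y+c$ with $c=\lambda_1\lambda_3/\lambda_2^2$. In characteristic $2$ the additive map $y\mapsto y^2+y$ on $\mathbb{F}_q$ has image equal to $\ker(\mathrm{Tr})$, where $\mathrm{Tr}\colon\mathbb{F}_q\to\mathbb{F}_2$ is the absolute trace, so $f$ is root-free over $\mathbb{F}_q$ exactly when $\mathrm{Tr}(c)=1$. Here is precisely where the hypothesis $q=2^h$ with $h\ge 2$ enters: it makes $\mathrm{Tr}$ surjective, so some $c\in\mathbb{F}_q$ with $\mathrm{Tr}(c)=1$ exists. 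Taking such a $c$ and setting $\lambda_1=c$, $\lambda_2=\lambda_3=1$ produces the desired row with no zero entries. Applying Construction \ref{linear AOA'} to the resulting $M$ yields the linear AOA$(1,3,q+2,q)$.
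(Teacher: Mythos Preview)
Your proof is correct and follows essentially the same route as the paper: both produce a matrix for Construction~\ref{linear AOA'} by arranging that one row records the values of an irreducible quadratic with all coefficients nonzero (the paper uses $h(x)=x^2+\alpha x+1$ with $\alpha\ne a+a^{-1}$ for all $a\in\mathbb{F}_q^\times$, while you use $x^2+x+c$ with $\mathrm{Tr}(c)=1$; these are equivalent via the substitution you indicate), and your framing as an invertible row operation on the standard Bush matrix $M_0$ makes the verification of property~(1) especially transparent. One small inaccuracy: the absolute trace $\mathrm{Tr}\colon\mathbb{F}_{2^h}\to\mathbb{F}_2$ is surjective for every $h\ge 1$, not only $h\ge 2$, so your argument in fact goes through for $q=2$ as well and the hypothesis $q\ge 4$ is not where you claim it enters.
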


\proof It is easy to see that there is an element $\alpha\in \mathbb{F}_q\setminus \{0\}$, so that $\alpha\neq a+a^{-1}$ for any $a\in \mathbb{F}_q\setminus \{0\}$. It follows that the polynomial $h(x)=x^2+\alpha x+1$ is  irreducible over $\mathbb{F}_q$. Denote $\mathbb{F}_q=\{a_1,a_2,\ldots,a_q\}$ and define
\begin{center}
$M=\begin{pmatrix}
h(a_1) & h(a_2)  & \cdots & h(a_q) & 1  & 1 \smallskip \\
\alpha a_1 & \alpha a_2  & \cdots & \alpha a_q & 0 & 1  \smallskip\\
a_1^2 & a_2^2  & \cdots & a_q^2 & 1 & 0  \smallskip \\
\end{pmatrix}$.
\end{center}
The matrix $M$ satisfies the conditions of
Construction \ref{linear AOA'} where each coordinate of the first row is nonzero, and therefore it yields a linear AOA having the stated parameters.  \qed

\begin{theorem}
\label{linear AOA(1,q-1,q+2,q)}
If $q\ge 4$ is a power of $2$ then there exists a linear AOA$(1,q-1,q+2,q)$ and AOA$(3,q-1,q+2,q)$.
\end{theorem}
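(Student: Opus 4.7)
The plan is to apply Theorem~\ref{linear AOA-MDS}: construct a single linear $[q+2,q-1]$ MDS code $C$ over $\mathbb{F}_q$ that contains both a linear $[q+2,1]$ MDS subcode and a linear $[q+2,3]$ MDS subcode, which would yield the linear AOA$(1,q-1,q+2,q)$ and the linear AOA$(3,q-1,q+2,q)$ respectively. I would take $C$ to be the dual of the $[q+2,3,q]$ hyperoval code, whose existence for $q$ a power of $2$ follows from Theorem~\ref{linear AOA(1,3,q+2,q)}. Using the identity $\sum_{a\in\mathbb{F}_q}a^m f(a)=f_{q-1-m}$ for $m=0,1,2$ (where $f_j$ is the coefficient of $x^j$ in $f$, valid for $q\ge 4$), $C$ can be identified with $\{(f(a_1),\ldots,f(a_q),f_{q-2},f_{q-3}):f\in\mathbb{F}_q[x],\,\deg f\le q-2\}$, writing $\mathbb{F}_q=\{a_1,\ldots,a_q\}$.

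The heart of the construction is an auxiliary polynomial $h(x)\in\mathbb{F}_q[x]$ of degree $q-4$ with no roots in $\mathbb{F}_q$ and with $h_{q-5}=0$. I would take $h=\prod_{i=1}^{(q-4)/2}(x^2+cx+n_i)$, a product of $(q-4)/2$ distinct irreducible quadratics over $\mathbb{F}_q$ sharing a common middle coefficient $c\ne 0$; the coefficient $h_{q-5}=((q-4)/2)\cdot c$ then vanishes since $(q-4)/2$ is even for $q\ge 4$ a power of $2$ (the edge case $q=4$ is handled by $h=1$). From $h$, the two required subcodes emerge: $g(x)=(x^2+cx+n)h(x)$ with one more irreducible quadratic has degree $q-2$, no roots in $\mathbb{F}_q$, and leading pair $g_{q-2}=h_{q-4}\ne 0$, $g_{q-3}=ch_{q-4}\ne 0$, so the associated codeword has all $q+2$ entries nonzero and spans the $[q+2,1]$ MDS subcode; the three-dimensional subspace $V=\{p(x)h(x):p\in\mathbb{F}_q[x],\,\deg p\le 2\}$ gives the candidate three-dimensional subcode.

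The main obstacle is verifying that $V$ is a $[q+2,3,q]$ MDS code. For $f=(\alpha_0+\alpha_1 x+\alpha_2 x^2)h$, a direct coefficient computation (exploiting $h_{q-5}=0$) yields $f_{q-2}=\alpha_2 h_{q-4}$ and $f_{q-3}=\alpha_1 h_{q-4}$, so the weight of the codeword attached to $f$ equals $q-r+[\alpha_2\ne 0]+[\alpha_1\ne 0]$, where $r$ is the number of roots of $\alpha_0+\alpha_1 x+\alpha_2 x^2$ in $\mathbb{F}_q$. A short case analysis on $(\alpha_1,\alpha_2)$ shows $r\le[\alpha_2\ne 0]+[\alpha_1\ne 0]$ in every case, so the weight is at least $q$. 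The characteristic-$2$ subtlety appears in the case $\alpha_1=0,\,\alpha_2\ne 0$: the polynomial $\alpha_0+\alpha_2 x^2$ has exactly one root in $\mathbb{F}_q$ because Frobenius is bijective, giving $r=1=[\alpha_2\ne 0]+[\alpha_1\ne 0]$ and meeting the bound with equality. This tight alignment is precisely what forces the condition $h_{q-5}=0$ that was baked into the construction. Once both subcodes are in hand, Theorem~\ref{linear AOA-MDS} delivers both AOAs.
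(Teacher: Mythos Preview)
Your argument is correct, and you work inside the same ambient code as the paper---the $[q+2,q-1]$ dual of the hyperoval code---but you locate the required subcodes by a considerably more elaborate mechanism.

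The paper writes down an explicit generator matrix $M$ for this code, with rows $m_i=(1,a_i,a_i^2,e_i)$ for $a_i\in\mathbb{F}_q^{\,*}$ and $e_i$ the $i$-th standard basis vector, and then simply exhibits the subcodes as linear combinations of those rows. For the one-dimensional subcode it takes $\alpha m_1+m_2+\cdots+m_{q-1}$ with $\alpha$ primitive; the power-sum identities over $\mathbb{F}_q^{\,*}$ immediately show every coordinate is nonzero. For the three-dimensional subcode it takes $\sum_i m_i$, $\sum_i a_i^{q-3}m_i$, $\sum_i a_i^{q-2}m_i$; the same identities reduce this $3\times(q+2)$ submatrix, after a row swap and the substitution $b_i=a_i^{-1}$, to another hyperoval matrix, so any three columns are automatically independent. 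No auxiliary polynomial, no supply of irreducible quadratics with a common middle coefficient, and no delicate condition like $h_{q-5}=0$ are needed.

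Your polynomial route---building a degree-$(q-4)$ polynomial $h$ with no roots and vanishing subleading coefficient, then multiplying by quadratics---is sound, and the case analysis on $(\alpha_1,\alpha_2)$ is exactly the right way to verify minimum distance in that description. What your approach buys is a self-contained weight computation that never appeals to a second hyperoval; what the paper's approach buys is brevity, since it recognizes the three-dimensional subcode as another hyperoval code rather than verifying the MDS property from scratch. Both arguments terminate at the same Theorem~\ref{linear AOA-MDS} (equivalently Construction~\ref{linear AOA'}).
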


\begin{proof}
Denote $\mathbb{F}_q=\{0,a_1,a_2,\ldots,a_{q-1}\}$ and take a primitive element $\alpha$. Define
\[
M=\left(
    \begin{array}{c}
      m_1 \\
      m_2 \\
      \vdots \\
      m_{q-2} \\
      m_{q-1} \\
    \end{array}
  \right)
=\left(
  \begin{array}{cccccccc}
    1 & a_1 & a_1^2 & 1 & 0 &\cdots& 0 & 0 \\
    1 & a_2 & a_2^2 & 0 & 1 & \cdots&0 & 0\\
    \vdots & \vdots & \vdots &\vdots& \vdots & \ddots & \vdots & \vdots \\
    1 & a_{q-2} & a_{q-2}^2 & 0 & 0 & \cdots& 1 & 0 \\
    1 & a_{q-1} & a_{q-1}^2 & 0 & 0 & \cdots& 0 & 1 \\
  \end{array}
\right).
\]
It is routine to check that any $q-1$ columns of $M$ are linearly independent. We choose the row vector $\mathbf{v}=\alpha m_1+m_2+\cdots+m_{q-1}=(\alpha,(\alpha-1)a_1,(\alpha-1)a_1^2,\alpha,1,\cdots,1)$, each coordinate of this row is nonzero.  Applying Construction \ref{linear AOA'} yields a linear AOA$(1,q-1,q+2,q)$.

For the case of AOA$(3,q-1,q+2,q)$, we choose $\mathbf{v_1}=m_1+m_2+\cdots+m_{q-1}=(1,0,0,1,\cdots,1)$, $\mathbf{v_2}=a_1^{q-3}m_1+a_2^{q-3}m_2+\cdots+a_{q-1}^{q-3}m_{q-1}=(0,0,1,a_1^{q-3},\cdots,a_{q-1}^{q-3})$ and $\mathbf{v_3}=a_1^{q-2}m_1+a_2^{q-2}m_2+\cdots+a_{q-1}^{q-2}m_{q-1}=(0,1,0,a_1^{q-2},\cdots,a_{q-1}^{q-3})$. It is obvious that any three columns of the submatrix consisting of $\mathbf{v_1},\mathbf{v_2},\mathbf{v_3}$ are linearly independent.  Applying Construction \ref{linear AOA'} yields a linear AOA$(1,q-1,q+2,q)$.
 \qed
\end{proof}


\section{Conclusion}

Augmented orthogonal arrays are equivalent to ideal ramp threshold schemes. The existence of AOAs is worth studying. In this paper, we showed that there is an AOA$(s,t,k,v)$ if and only if there is an OA$(t,k,v)$ which can be partitioned into $v^{t-s}$ subarrays, each being an OA$(s,k,v)$. We also  rewrote  Stinson's construction for linear AOAs and proved that there is a linear AOA$(s,t,k,q)$ if and only if there is a linear MDS code of length $k$ and dimension $t$ over $\mathbb{F}_q$ which contains a linear MDS subcode of length $k$ and dimension $s$ over $\mathbb{F}_q$. Many infinite classes of AOAs over alphabets of non-prime power orders and linear AOAs were obtained in Sections 2 and 3.

From the three problems posed by Stinson, it is more interesting to identify the parameters  for which there exists an AOA$(s, t , k, q)$ but there does not exist an OA$(t , k+t-s,q)$. We use Bush bound and Theorem \ref{linear bound} to identify some parameters.

\begin{theorem} [\cite{Bush1952a}]$(${\rm Bush Bound}$)$
\label{Bush Bound} If there is an OA$(t, k, v)$, then
\[
k\leq \left \{ \begin{array}{ll}
v + t -1  & {\rm if}\ t = 2,\ {\rm or\ if}\ v\ {\rm is\ even\ and}\ 3\leq t \leq v\\
v+t-2 & {\rm if}\ v\ {\rm is\ odd\ and}\ 3\leq t\leq v\\
t + 1 & {\rm if}\  t\geq v.
\end{array}
\right .
\]
\end{theorem}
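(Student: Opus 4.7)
The strategy is to split by the size of $t$ relative to $v$, reducing higher strength to lower strength via a common slicing reduction and invoking ad hoc impossibility arguments at the boundaries.

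The slicing reduction I would use: given an OA$(t,k,v)$ over $X$, fix any column $c$ and any symbol $x\in X$; the $v^{t-1}$ rows with $c$-entry $x$, after deleting column $c$, form an OA$(t-1,k-1,v)$, since any $t-1$ of the remaining $k-1$ columns together with $c$ comprise $t$ columns of the original array.

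For the base case $t=2$ I would invoke the classical equivalence OA$(2,k,v)\leftrightarrow k-2$ MOLS of side $v$; the MacNeish bound $k-2\le v-1$ gives $k\le v+1=v+t-1$. For $v$ even and $3\le t\le v$, iterated slicing descends to $t=2$ and yields $k\le v+t-1$. For $v$ odd and $4\le t\le v$, iterated slicing descends to the odd-$v$ boundary case at $t=3$ (treated below) and yields $k\le v+t-2$.

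The main obstacle is the boundary case $t=3$, $v$ odd: slicing alone only returns $k\le v+2$, one more than claimed. To rule out an OA$(3,v+2,v)$ for $v$ odd I would invoke Bush's classical parity double-count. Such an array would, via slicing, generate $v$ coherent copies of a complete $(v-1)$-MOLS system of order $v$ (an affine plane of order $v$) with a rigid third-coordinate compatibility; a signed count modulo $2$ of certain transversal incidences produces a quantity of one parity from one grouping and the opposite parity from another, using the oddness of $v$, a contradiction.

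For the regime $t\ge v$ I would combine a two-column restriction with induction. Given OA$(t,k,v)$ with $k\ge t+2$, fixing any two specified columns to any pair of values yields, after deleting those columns, an OA$(t-2,k-2,v)$; applying induction to this array in the same regime (when $t-2\ge v$) gives $k-2\le(t-2)+1$, i.e.\ $k\le t+1$, contradicting $k\ge t+2$. The boundary values $t=v$ and $t=v+1$, where the two-step recursion drops out of the $t\ge v$ regime, require direct impossibility arguments of the same flavor as the $t=3$, $v$ odd case, combining slicing with the MOLS bound and a parity refinement. Overall I expect the parity arguments at $t=3$ (odd $v$) and at the $t=v$, $t=v+1$ boundaries to be the only steps requiring genuine combinatorial input; the rest of the proof is a bookkeeping exercise in slicing combined with the classical MOLS bound.
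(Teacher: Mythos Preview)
The paper does not prove this theorem at all: it is quoted as a classical result of Bush (1952) and used only as a tool in Section~4 to certify nonexistence of certain OAs. So there is no paper-side argument to compare against; your proposal is attempting strictly more than the paper does.

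That said, your outline has real gaps. The slicing reduction OA$(t,k,v)\to$ OA$(t-1,k-1,v)$ and the MOLS bound for $t=2$ are correct and do yield $k\le v+t-1$ by induction for all $t\ge 2$; this covers the first line of the bound. The difficulty is entirely in the two sharpenings, and in both places your proposal stops short of an argument. For $t=3$ with $v$ odd you invoke ``Bush's classical parity double-count'' and describe it only as a signed count of transversal incidences; nothing you wrote pins down what is being counted or why oddness of $v$ forces opposite parities. This is the crux of that case and cannot be left at that level of description. For $t\ge v$ your two-column slicing plus induction is circular at the base: when $t=v$ or $t=v+1$, slicing drops you into the regime $3\le t\le v$, where the already-established bounds give only $k\le 2v-2$ or $k\le 2v-1$, nowhere near the required $k\le t+1=v+1$ or $v+2$. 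You acknowledge this and say these boundaries ``require direct impossibility arguments of the same flavor'' as the $t=3$ odd case, but for $v$ even there is no such case to borrow from, and you give no indication of what the argument would be. In short, the reductions are fine; the two pieces of genuine combinatorial content (no OA$(3,v+2,v)$ for $v$ odd, and the base of the $t\ge v$ induction) are asserted rather than proved.
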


There have been some relatively minor improvements to these general bounds over the years. On the other hand, the linear case has received considerably more attention and much more is known in this case.

The following is known as the Main Conjecture for linear MDS codes. It is attributed to Segre (1955).

\noindent {\bf Conjecture 1} (Main Conjecture). Suppose $q$ is a prime power. Let $M (t, q)$ denote the maximum value of $k$ such that there exists
a linear MDS code of length $k$ and dimension $t$ over $\mathbb{F}_q$. If $2 \leq t < q$, then
\[
M(t,q)=\left \{
\begin{array}{ll}
q+2 & {\rm if}\ $q$\ {\rm is\ a\ power\ of}\ 2\ {\rm and}\ t\in \{3,q-1\} \\
 q+1 & {\rm otherwise}.
\end{array}
\right .
\]
If $t\geq q$, then $M (t , q) = t + 1$.

The {\bf Main Conjecture} has been shown to be true in many parameter situations, including all the cases where $q$ is prime. This is a famous result of Simeon Ball \cite{Ball2012} proven in 2012.

The following theorem summarizes some of the known results. These and other related results are surveyed in \cite{Huntemann2012} .

\begin{theorem} \label{linear bound} Suppose that $q = p^j$ where $p$ is prime, and suppose $2\leq t < q$. Then the {\bf Main Conjecture} is true in the following
cases:

$1.$ $q$ is prime $($for all relevant $t)$ \\
\indent $2.$ $q\leq 27$ $($for all relevant $t)$\\
\indent $3.$ $t\leq 5$ or $t\geq q-3$\\
\indent $4.$ $t \leq p$.
\end{theorem}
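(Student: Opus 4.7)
The plan is to assemble the four cases of Theorem~\ref{linear bound} from results already in the MDS code literature, using the duality of linear MDS codes to cut the work roughly in half. Since the paper treats this as a compilation of known results surveyed in \cite{Huntemann2012}, the job is to identify, for each case, the minimal ingredients needed and to verify that the boundary lines ($t\leq 5$ vs.\ $t\geq q-3$, prime vs.\ prime power, etc.) match up.

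First I would set up the duality reduction. The dual of a linear $[k,t]$ MDS code over $\mathbb{F}_q$ is a linear $[k,k-t]$ MDS code, so $M(t,q)\geq q+1$ iff $M(k-t,q)\geq q+1$, and similarly for the exceptional value $q+2$ in characteristic $2$. Consequently, it suffices to prove the Main Conjecture for $t$ at most about $(q+1)/2$; the large-$t$ range follows by dualising. In particular, the ``$t\geq q-3$'' half of case $3$ is equivalent to the ``$t\leq 3$'' case under duality, and case $1$ ($q$ prime) can be reduced, via duality, to $t\leq (q+1)/2\leq p$.

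Next I would invoke the deepest ingredient, S.~Ball's 2012 theorem \cite{Ball2012}: if $q=p^h$ and $\min\{t,q-t+1\}\leq p$, then $M(t,q)=q+1$ (except for the two exceptional values in characteristic $2$). This immediately yields case $4$ ($t\leq p$) and, combined with the duality reduction above, case $1$ ($q$ prime, where $p=q$ so $t\leq p$ holds after dualising if necessary). For case $3$ in its original form I would invoke the classical projective-geometric results on MDS codes of small dimension: $t=2$ is immediate from the Reed--Solomon and repetition constructions; $t=3$ is Segre's theorem on arcs in $PG(2,q)$, which produces the extra length $q+2$ exactly when $q$ is a power of $2$; the cases $t=4,5$ are due to Casse, Glynn, Hirschfeld--Thas and others using the theory of normal rational curves and $(k,n)$-arcs in $PG(n-1,q)$. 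Applying duality again converts these into the $t\geq q-3$ statement.

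Finally, case $2$ ($q\leq 27$) is handled by gathering the small-$q$ results from the literature: the theoretical ingredients above already dispose of most parameter pairs, and the residual cases (the smallest being essentially $q=32$ and beyond, which are outside the range) are closed by computer enumerations of MDS codes over the relevant fields. The main obstacle is not conceptual but organizational: one must carefully check that the hypotheses of Ball's theorem, combined with the duality reduction, exhaust all pairs $(t,q)$ falling under cases $1$, $3$ and $4$, and that the characteristic-$2$ exceptions ($t\in\{3,q-1\}$) are correctly preserved by duality. Ball's theorem itself would be treated as a black box, exactly as in the survey.
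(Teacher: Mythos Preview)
The paper does not prove this theorem at all: it is stated as a summary of known results, with the reader referred to the survey \cite{Huntemann2012} and to Ball's paper \cite{Ball2012} for case~1. There is therefore no proof in the paper against which to compare your sketch.

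That said, your outline is a reasonable account of how the four cases are assembled in the literature---Ball's theorem for cases~1 and~4, the classical arc-theoretic results (Segre, Casse--Glynn, Hirschfeld--Thas) for small $t$, duality to transfer these to large $t$, and exhaustive treatment of small $q$---and matches what one finds in the cited survey. Two minor corrections: in your duality reduction you write ``$M(t,q)\geq q+1$ iff $M(k-t,q)\geq q+1$'' with an undefined $k$; the clean statement is that a linear $[n,t]$ MDS code exists iff a linear $[n,n-t]$ MDS code does, so the Main Conjecture for dimension $t$ is equivalent to the Main Conjecture for dimension $q+2-t$ (and the exceptional pair $\{3,q-1\}$ is self-dual under this involution, as you note). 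Second, no dualising is needed for case~1: when $q$ is prime and $2\leq t<q$, we already have $t\leq q-1<p=q$, so Ball's theorem applies directly.
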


Based on Bush bound and Theorem \ref{linear bound}, we  check AOAs and linear AOAs in Section 2 and Section 3 to identify the following parameters for which there exists an (linear) AOA$(s, t , k, v)$ but there does not exist an (linear) OA$(t , k+t-s,v)$, where $q$ is a prime power.

\begin{center}
\begin{tabular}{|c|c|c|c|}
\hline
AOA$(s,t,k,v)$ & Conditions &  OA$(t,k+t-s,v)$ & Sources \\ \hline
linear AOA$(1, t , q, q)$ &  $q$ is  an odd prime power, &  no  & \cite{Stinson2017}\\
&  $3\leq t\leq q$  & & \\ \hline
linear AOA$(s, q+1, q+1, q)$ &  $s\leq q-1$ &  no  & \cite{Stinson2017}\\ \hline
AOA$(1,k-1,k,v)$ & $v\not\equiv 2\pmod 4, k>v\geq 3$ & no  & Theorems \ref{AOA(1,k-1,k,v)}, \ref{Bush Bound} \\ \hline
AOA$(1,k-1,k,v)$ & $k\equiv 0\pmod 2, k>v\geq 2$ & no & Theorems \ref{zero-sum}, \ref{Bush Bound} \\ \hline
linear AOA$(1,k-1,k,q)$ & $q>2, k\geq q$ & no  & Theorems \ref{AOA(1,k-1,k,q)}, \ref{Bush Bound} \\ \hline
linear AOA$(s,t,q+1,q)$ & $s\in \{1,2\}$, $s+2\leq t\leq q$ & no   & Theorems \ref{AOAt-sGeq2}, \ref{Bush Bound} \\ \hline
linear AOA$(1,3,q+2,q)$ & $q=2^{\alpha}\geq 4$ & no  & Theorems \ref{linear AOA(1,3,q+2,q)}, \ref{Bush Bound} \\ \hline
linear AOA$(1,q-1,q+2,q)$ & $q=2^{\alpha}\geq 4$ & no  & Theorems \ref{linear AOA(1,q-1,q+2,q)}, \ref{Bush Bound} \\ \hline
linear AOA$(3,q-1,q+2,q)$ & $q=2^{\alpha}>4$ & no linear OA & Theorems \ref{linear AOA(1,q-1,q+2,q)}, \ref{linear bound} \\ \hline
\end{tabular}
\end{center}
By Theorems \ref{AOAt-sGeq2}, \ref{linear bound}, we can find many other parameters $s,t,q$ for which there is a linear AOA$(s,t,q+1,q)$ and there does not exist  a linear OA$(t,q+1+t-s,q)$.

\bigskip
\noindent {\bf Acknowledgements}

The authors would like to thank Prof. D. R. Stinson  who suggested us  identifying the parameters  for which there exists an AOA$(s, t , k, q)$ but there does not exist an OA$(t , k+t-s,q)$.

\end{document}